\documentclass{siamart190516}

\usepackage{lipsum}
\usepackage{amsfonts}
\usepackage{graphicx}
\usepackage{epstopdf}
\usepackage{algorithmic}
\ifpdf
  \DeclareGraphicsExtensions{.eps,.pdf,.png,.jpg}
\else
  \DeclareGraphicsExtensions{.eps}
\fi


\newsiamremark{hypothesis}{Hypothesis}
\crefname{hypothesis}{Hypothesis}{Hypotheses}
\newsiamthm{claim}{Claim}

\headers{Adaptive Deep Neural Network}{Z. Cai, J. Chen and M. Liu}

\title{Self-Adaptive Deep Neural Network: \\ Numerical Approximation to Functions and PDEs \thanks{This work was supported in part by the National Science Foundation
under grant DMS-2110571.}}

\author{Zhiqiang Cai\thanks{Department of Mathematics, Purdue University, 150 N. University Street, West Lafayette, IN 47907-2067 
  (\email{caiz@purdue.edu}, \email{chen2042@purdue.edu}).}
\and Jingshuang Chen\footnotemark[2]
\and Min Liu\thanks{School of Mechanical Engineering, Purdue University, 585 Purdue Mall,
West Lafayette, IN 47907-2088(\email{liu66@purdue.edu}). }}

\usepackage{amsopn}

\ifpdf
\hypersetup{
  pdftitle={Adaptive DNN},
  pdfauthor={Z.Cai, J.Chen, M.Liu}
}
\fi

\DeclareUnicodeCharacter{2212}{-}
\usepackage[utf8]{inputenc}
\usepackage{bm}
\usepackage{mathtools}
\usepackage{indentfirst}
\usepackage{subfigure}
\usepackage{wrapfig}
\usepackage{tcolorbox}
\usepackage{color}
\usepackage[export]{adjustbox}
\usepackage{makecell}
\usepackage{textcomp}
\usepackage{algorithm }
\usepackage{algorithmic}
\usepackage{booktabs}

\newcommand{\R}{\mathbb{R}}

\setlength{\topmargin}{-.3in} 
\setlength{\textheight}{8.4in}
\setlength{\textwidth}{5.9in}      
\setlength{\oddsidemargin}{0.2in}  
\setlength{\evensidemargin}{0.2in} 
\usepackage{graphicx}
\usepackage{diagbox}
\usepackage{pifont}
\newcommand{\vertiii}[1]{{\left\vert\kern-0.25ex\left\vert\kern-0.25ex\left\vert #1 
    \right\vert\kern-0.25ex\right\vert\kern-0.25ex\right\vert}}

\newcommand{\btheta}{\mbox{\boldmath${\theta}$}}

\newcommand{\bomega}{\mbox{\boldmath${\omega}$}}
\newcommand{\bxi}{\mbox{\boldmath$\xi$}}

\newcommand{\bff}{{\bf f}}

\newcommand{\bb}{{\bf b}}
\newcommand{\bv}{{\bf v}}

\newcommand{\bx}{{\bf x}}

\newcommand{\cI}{{\cal I}}
\newcommand{\cK}{{\cal K}}
\newcommand{\cM}{{\cal M}}
\newcommand{\cQ}{{\cal Q}}
\newcommand{\cS}{{\cal S}}
\newcommand{\cT}{{\cal T}}

\def\bb{{\bf b}}
\def\bc{{\bf c}}
\def\bl{{\bf l}}
\def\bo{{\bf o}}
\def\bx{{\bf x}}
\def\by{{\bf y}}

\def\bzero{{\bf 0}}

\def\cC{{\cal C}}
\def\cL{{\cal L}}
\def\cM{{\cal M}}
\def\cN{{\cal N}}
\def\cP{{\cal P}}
\def\cS{{\cal S}}
\def\cT{{\cal T}}

\def\cP{{\cal P}}

\begin{document}

\maketitle
\begin{abstract}
Designing an optimal deep neural network for a given task is important and challenging in many machine learning applications. To address this issue, we introduce a self-adaptive algorithm: the adaptive network enhancement (ANE) method, written as loops of the form
\begin{center}
$\mbox{\bf train }  \rightarrow \,\,\,
\mbox{\bf estimate }  \rightarrow \,\,\,
\mbox{\bf enhance}.$
\end{center}

\noindent Starting with a small two-layer neural network (NN), the step {\bf train} is to solve the optimization problem at the current NN; the step {\bf estimate} is to compute {\it a posteriori} estimator/indicators using the solution at the current NN; the step {\bf enhance} is to 
add new neurons to the current NN.

Novel network enhancement strategies based on the computed estimator/indicators are developed in this paper to determine how many new neurons and when a new layer should be added to the current NN. The ANE method provides a natural process for obtaining a good initialization in training the current NN; in addition, we introduce an advanced procedure on how to initialize newly added neurons for a better approximation. We demonstrate that the ANE method
can automatically design a nearly minimal NN for learning functions exhibiting sharp transitional layers as well as discontinuous solutions of hyperbolic partial differential equations.
\end{abstract}

\begin{keywords}
Self-adaptivity, Advection-reaction equation, Least-squares approximation, Deep neural network, ReLU activation
\end{keywords}

\section{Introduction}
Deep neural network (DNN) has achieved astonishing performance in computer vision, natural language processing, and many other artificial intelligence (AI) tasks (see, e.g., \cite{Collobert2011,NIPS2012_4824,Goodfellow2016}). This success encourages wide applications to other fields, including recent studies of using DNN models to learn solutions of partial differential equations (PDEs) (see, e.g., \cite{Berg18, CAI2020, Cai2021nonlinear, Weinan18, Karniadakis19, Sirignano18}). The phenomenal performance on many AI tasks comes at the cost of high computational complexity. Accordingly, designing efficient network architectures for DNN is an important step towards enabling the wide deployment of DNN in various applications.

Studies and applications of neural network (NN) may be traced back to the work of Hebb \cite{Hebb1949} in the late 1940's and Rosenblatt \cite{Rosenblatt1958} in the 1950's. 
DNN produces a new class of functions through compositions of linear transformations and activation functions. This class of functions is extremely rich. For example, it contains piece-wise polynomials, which are the footing of spectral elements, and continuous and discontinuous finite element methods for computer simulations of complex physical, biological, and human-engineered systems. It approximates polynomials of any degree with exponential efficiency, even using simple activation functions like ReLU. More importantly, a neural network function can automatically adapt to a target function or the solution of a PDE.

Despite great successes of DNN in many practical applications, it is widely accepted that approximation properties of DNN are not yet well understood and that understanding on why and how they work could lead to significant improvements in many machine learning applications. First, some empirical observations suggest that deep network can approximate many functions more accurately than shallow network, but rigorous study on the theoretical advantage of deep network is scarce. Therefore, even in the manual design of network models, the addition of neurons along depth or width is {\it ad-hoc}. Second, current methods on design of the architecture of DNN in terms of their width and depth are empirical. Tuning of depth and width is tedious, mainly from experimental results in ablation studies which typically require domain knowledge about the underlying problem. Third, there is a tendency in practice to use over-parametrized neural networks; this leads to a high-dimensional nonlinear optimization problem which is much more difficult to train than a low-dimensional one. These considerations suggest that a fundamental, open question to be addressed in scientific machine learning is: what is the optimal network model required, in terms of width, depth, and the number of parameters, to learn data, a function, or the solution of a PDE within some prescribed accuracy?


To address this issue, we introduce a self-adaptive algorithm: the adaptive network enhancement (ANE) method, written as loops of the form
\begin{center}
$\mbox{\bf train }  \rightarrow \,\,\,
\mbox{\bf estimate }  \rightarrow \,\,\,
\mbox{\bf enhance}.$
\end{center}

\noindent Starting with a small two-layer NN, the step {\bf train} is to solve the optimization problem of the current NN; the step {\bf estimate} is to compute {\it a posteriori} estimator/indicators using the solution at the current NN; the step {\bf enhance} is to 
add new neurons to the current NN. 
This adaptive algorithm learns not only from given information (data, function, PDE) but also from the current computer simulation, and it is therefore a learning algorithm at a level which is more advanced than common machine learning algorithms.

To develop an efficient ANE method, we need to address the following essential questions at each adaptive step when the current NN is not sufficient for the given task: 
\begin{itemize}
    \item [(a)] how many new neurons should be added?
    \item [(b)] when should a new layer be added?
\end{itemize}
For a two-layer NN, we proposed the ANE method (see Algorithm 4.1) for learning a given function in \cite{LiuCai1} and the solution of a given self-adjoint elliptic PDEs through the Ritz formulation in \cite{LiuCai2}. In the case of a two-layer NN, question (b) is irrelevant and question (a) was addressed by introducing a network enhancement strategy that decides the number of new neurons to be added in the first hidden layer. This strategy is based on the physical partition of the current computer simulation determined by the {\it a posteriori} error indicators (see Algorithm 3.1).

For a multi-layer NN, it is challenging to address both the questions. First, the role of a neuron in approximation at a hidden layer varies and depends on which hidden layer the neuron is located. Second, there is almost no understanding on the role of a specific hidden layer in approximation and, hence, we have no {\it a priori} approximation information for determining when a new layer should be added. 
To resolve question (a) for a multi-layer, we will exploit the geometric property of the current computer simulation and introduce a novel enhancement strategy (see Algorithm 4.2) that determines the number of new neurons to be added at a hidden layer other than the first hidden layer. For question (b), we will introduce a computable quantity to measure the improvement rate of two consecutive NNs per the relative increase of parameters. When the improvement rate is small, then a new layer is started. 

Training DNN, i.e., determining the values of the parameters of DNN, is a problem in nonlinear optimization. This high dimensional, nonlinear optimization problem tends to be computationally intensive and complicated and is usually solved iteratively by the method of gradient descent and its variations (see \cite{BoCuNo2018}). In 
general, a nonlinear optimization has many solutions, and the desired one is obtained only if we start from a close enough first approximation. A common way to obtain a good initialization is by the method of continuation \cite{AllgowerGeorg1990}. 
The ANE method provides a natural process for obtaining a good initialization. Basically, the approximation at the previous NN is already a good approximation to the current NN in the loops of the ANE method. To provide a better approximation, 
we initialize the weights and bias of newly added neurons at the first hidden layer by using the physical partition of the domain (see Section 3 and \cite{LiuCai1} for details); in this paper we introduce an advanced procedure on how to initialize newly added neurons at a hidden layer that is not the first hidden layer.

For simplicity of presentation, the ANE method for a multi-layer NN is first described for learning a given function through the least-squares loss function (see Section 4). The method is then applied to learn solutions of linear advection-reaction equations through
the least-squares neural network (LSNN) method introduced in \cite{Cai2021linear} (see Section 7). 
We demonstrate that the ANE method can automatically design a nearly minimal NN for learning functions exhibiting sharp transitional layers as well as discontinuous solutions of hyperbolic PDEs.

Recently, there is growing interest in automatic machine learning (AutoML) in an effort of replacing a manual design process of architectures by human experts. Neural architecture search (NAS) method (see a survey paper \cite{NAS_survey} and reference therein) presents a general methodology at high level on AutoML. It consists of three components: search space, search strategy, and performance estimation strategy. Usually the resulting algorithm is computationally intensive because it explores a wide range of potential network architectures. Nevertheless, the NAS outperforms manually designed architectures in accuracy on some tasks such as image classification, object detection, or semantic segmentation. The NAS based on the physics-informed neural network is recently used for solving stochastic groundwater flow problem in \cite{Guo2020}.

The paper is organized as follows. DNN, the best least-squares (LS) approximation to a given function using DNN, and the discrete counterpart of the best LS approximation are introduced in Section~2. The physical partition for a DNN function is described in Section 3. The ANE method, initialization of parameters at different stage, and numerical experiments are presented in Sections 4, 5, and 6, respectively. Finally, application of the ANE method to the linear advection-reaction equation is given in Section 7.

 \section{Deep Neural Network and Least-squares Approximation}

A deep neural network defines a function of the form
\begin{equation}\label{NN}
 \by=\cN(\bx)=\bomega^{L}\cdot\left(N^{(L-1)} \circ \cdots \circ N^{(1)}(\bx)\right) - b^{L}
 :\, \bx\in\R^{d}
\longrightarrow \by=\cN(\bx)\in\R,
\end{equation} 
where $d$ is the dimension of input $\bx$, $\bomega^{(L)}\in \R^{n_{L-1}}$, $b^{(L)}\in \R$, the symbol $\circ$ denotes the composition of functions, and $L$ is the depth of the network. 
For $l=1,\cdots,L-1$, the $N^{(l)}\!: \R^{n_{l-1}}\! \rightarrow \R^{n_{l}}$ is called
the $l^{th}$ hidden layer of the network defined by
\begin{equation}\label{layerdef}
  N^{(l)}(\bx^{(l-1)})
  = \sigma (\bomega^{(l)}\bx^{(l-1)}-\bb^{(l)})
  \quad\mbox{for } \bx^{(l-1)}\in \R^{n_{l-1}},
\end{equation}
where $\bomega^{(l)} 
\in \R^{n_{l}\times n_{l-1}}$, $\bb^{(l)}\in \R^{n_{l}}$, $\bx^{(0)}=\bx$, and $\sigma(t)=\max\{0, t\}^p$ with positive integer $p$ is the activation function and its application 
to a vector is defined component-wise. This activation function is referred to as a spline activation ReLU$^p$. When $p=1$, $\sigma(t)$ is the popular rectified linear unit (ReLU). There are many other activation functions  such as (logistic, Gaussian, arctan) sigmoids (see, e.g., \cite{pinkus1999}).

Let ${\small\btheta}$ denote all parameters to be trained, i.e., the weights $\{\bomega^{(l)}\}_{l=1}^L$ and the bias $\{\bb^{(l)}\}_{l=1}^L$. Then the total number of parameters is given by
\begin{equation}\label{DoF}
   N=M_d(L) =\sum^L_{l=1} n_{l}\times (n_{l-1}+1).
\end{equation}
Denote the set of all DNN functions by
\[
\cM_N({\small\btheta},L)=\big\{
\bomega^{L}\cdot\left(N^{(L-1)} \circ \cdots \circ  N^{(1)}(\bx)\right) - b^{L} :\,  \bomega^{(l)} 
\in \R^{n_{l}\times n_{l-1}},\,\, \bb^{(l)}\in \R^{n_{l}} \mbox{ for } l=1,...,L-1
\big\}.
\]

Let $f(\bx)\in \R$ be a given target function defined in a domain $\Omega\in\R^d$. Training DNN to learn the function $f(\bx)$ using least-squares loss function amounts to solve the following best least-squares approximation: find $f_N(\bx ; {\small\btheta}^*)\in \cM_N({\small\btheta},L)$ such that
 \begin{equation}\label{L2App}
  \|f(\cdot)-f_N(\cdot;{\small\btheta}^*)\|=\min_{v\in \cM_N({\small\btheta},L)} \|f-v\|
  =\min_{{\scriptsize\btheta}\in\R^{N}} \|f(\cdot)-v(\cdot;{\small\btheta})\|,
  \end{equation}
where $v(\bx;{\small\btheta})\in \cM_N({\small\btheta},L)$ is of the form
 \[
 v(\bx;{\small\btheta})= \bomega^{L}\cdot\left(N^{(L-1)} \circ \cdots \circ  N^{(1)}(\bx)\right) - b^{L}
 \]
with $N^{(l)}(\cdot)$ defined in (\ref{layerdef}), and $\|v(\cdot)\|=\left(\int_\Omega v^2(\bx)\,d\bx\right)^{1/2}$ is the $L^2(\Omega)$ norm.

Let $\cI$ be the integral operator over the domain $\Omega$ given by
 \begin{equation}\label{integral}
 \cI(f)=\int_\Omega f(\bx)\, d\bx.
 \end{equation}
Let $\cT=\{K\, :\, K\mbox{ is an open subdomain of } \Omega\}$ be a partition of the domain $\Omega$, i.e.,
union of all subdomains of ${\cal T}$ equals to the whole domain $\Omega$ and that any two distinct subdomains of ${\cal T}$ have no intersection.
Let $\cQ_{_\cT}$ be a quadrature operator based on the partition $\cT$, i.e., 
 $\cI (v) \approx \cQ_{_\cT} \big(v\big)$, such that 
 \[
 \|v\|_{_\cT}=\sqrt{(v,v)_{_\cT}}=\sqrt{\cQ_{_\cT} \big(v^2\big)}
 \]
defines a weighted $l_2$-norm.
The best discrete least-squares approximation with numerical integration over the partition $\cT$ is to find $f_{_\cT}(\bx ; {{\small\btheta}^*_{_\cT}})\in \cM_N({\small\btheta},L)$ such that
 \begin{equation}\label{L2App-d}
  \|f(\cdot)-f_{_\cT}(\cdot ; {{\small\btheta}^*_{_\cT}})\|_{_\cT}
  =\min_{v\in \cM_N({\small\btheta},L)} \|f-v\|_{_\cT} 
  =\min_{{\scriptsize\btheta}\in\R^{N}}
  \|f(\cdot)-v(\cdot;{\small\btheta})\|_{_\cT}.
  \end{equation}
  
\begin{theorem}\label{error-bound1}
Assume that there exists a positive constant $\alpha$ such that $\alpha\, \|v\|^2 \leq \|v\|_{_\cT}^2$ for all $v\in \cM_{2N}\equiv\cM_N({\small\btheta},L)\oplus \cM_N({\small\btheta},L)$.
Let $f_{_\cT}$ be a solution of {\em (\ref{L2App-d})}.
Then there exists a positive constant $C$ such that
\begin{equation}\label{error-bound}
 \quad  \qquad C\,\|f-f_{_\cT}\|
 \leq \!\! \inf_{v\in \cM_N({\small\btheta},L)}\!\! \left\{\|f-v\| + \!\!\sup _{w\in \cM_{2N}}\!\! \dfrac{|(\cI-\cQ_{_\cT})(vw)|}{\|w\|}\right\}+ \!\!\sup _{w\in \cM_{2N}}\!\! \dfrac{|(\cI-\cQ_{_\cT})(fw)|}{\|w\|}.
\end{equation}
\end{theorem}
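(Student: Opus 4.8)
The plan is to adapt the first Strang lemma to the present nonlinear setting. Throughout, abbreviate $\cM_N=\cM_N(\btheta,L)$ and write $E_1(v)=\sup_{w\in\cM_{2N}}|(\cI-\cQ_{_\cT})(vw)|/\|w\|$ and $E_2=\sup_{w\in\cM_{2N}}|(\cI-\cQ_{_\cT})(fw)|/\|w\|$ for the two consistency quantities on the right-hand side, and fix an arbitrary $v\in\cM_N$. The central observation is that the difference $w:=f_{_\cT}-v$ of two network functions lies in $\cM_{2N}=\cM_N\oplus\cM_N$, which is exactly the set on which coercivity is assumed; this is what makes the hypothesis usable and is the reason it is posed on $\cM_{2N}$ rather than on $\cM_N$.

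First I would extract an inequality from the discrete optimality of $f_{_\cT}$ that avoids any appeal to Galerkin orthogonality (which is unavailable here because $\cM_N$ is not a linear space). Since $f_{_\cT}$ minimizes $\|f-\cdot\|_{_\cT}$ over $\cM_N$ and $v\in\cM_N$, we have $\|f-f_{_\cT}\|_{_\cT}^2\le\|f-v\|_{_\cT}^2$. Expanding the left-hand side as $\|(f-v)-w\|_{_\cT}^2$ and using that $(\cdot,\cdot)_{_\cT}$ is a genuine (weighted $\ell_2$) inner product, the $\|f-v\|_{_\cT}^2$ terms cancel and one is left with the clean bound
\[
\|w\|_{_\cT}^2\le 2\,(f-v,\,w)_{_\cT}.
\]

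Next I would feed this into the coercivity hypothesis and convert the discrete inner product on the right to its continuous counterpart, isolating the quadrature error. By coercivity on $\cM_{2N}$, $\alpha\|w\|^2\le\|w\|_{_\cT}^2\le 2\,(f-v,w)_{_\cT}$. Writing $(f-v,w)_{_\cT}=(f-v,w)+(\cQ_{_\cT}-\cI)\big((f-v)w\big)$ and using bilinearity to split the quadrature error as $(\cQ_{_\cT}-\cI)(fw)-(\cQ_{_\cT}-\cI)(vw)$, the three pieces are controlled by Cauchy--Schwarz and by the definitions of $E_2$ and $E_1(v)$, giving $(f-v,w)_{_\cT}\le\big(\|f-v\|+E_1(v)+E_2\big)\|w\|$. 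Dividing the resulting inequality $\alpha\|w\|^2\le 2\big(\|f-v\|+E_1(v)+E_2\big)\|w\|$ by $\|w\|$ (the case $w=0$ being trivial) yields $\|w\|\le\tfrac{2}{\alpha}\big(\|f-v\|+E_1(v)+E_2\big)$.

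Finally I would close with the triangle inequality $\|f-f_{_\cT}\|\le\|f-v\|+\|w\|$, substitute the bound on $\|w\|$, collect the $\|f-v\|$ and $E_1(v)$ terms, and take the infimum over $v\in\cM_N$; choosing $C=\alpha/(\alpha+2)$ then reproduces (\ref{error-bound}). The step I expect to be the main obstacle is the very first one: because $\cM_N$ is a nonlinear manifold, the usual orthogonality $(f-f_{_\cT},w)_{_\cT}=0$ is false, and a naive triangle-inequality argument reintroduces $\|f-v\|_{_\cT}$, whose conversion to the continuous norm would generate an uncontrollable $(\cI-\cQ_{_\cT})(f^2)$ term that has no match on the right-hand side. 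Deriving the squared estimate $\|w\|_{_\cT}^2\le 2(f-v,w)_{_\cT}$ directly from the minimization property is precisely what sidesteps both difficulties, and getting that algebraic cancellation right is the crux.
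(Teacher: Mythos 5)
Your proof is correct, and there is nothing in the paper to compare it against line by line: the paper's own ``proof'' is a one-line deferral to Theorem 4.1 of \cite{LiuCai1}. Your argument is exactly the Strang-type argument that this class of results rests on, and every step checks out: $w=f_{_\cT}-v$ lies in $\cM_{2N}$ (using that $\cM_N$ is closed under negation of the output weights and bias, which is also what makes the coercivity hypothesis on $\cM_{2N}$ usable); the expansion of $\|f-f_{_\cT}\|_{_\cT}^2\le\|f-v\|_{_\cT}^2$ gives the cancellation $\|w\|_{_\cT}^2\le 2(f-v,w)_{_\cT}$; and splitting $(\cQ_{_\cT}-\cI)\big((f-v)w\big)$ into the $fw$ and $vw$ pieces produces precisely the two sup terms in (\ref{error-bound}), yielding the bound with $C=\alpha/(\alpha+2)$. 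Your closing diagnosis is also the right one: a triangle inequality in the discrete norm would require converting $\|f-v\|_{_\cT}$ back to $\|f-v\|$, generating a term $(\cI-\cQ_{_\cT})\big((f-v)^2\big)$ that the right-hand side of (\ref{error-bound}) cannot absorb (since $f-v\notin\cM_{2N}$), so the competitor expansion is not merely convenient but necessary for the theorem as stated.
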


\begin{proof}
The theorem may be proved in a similar fashion as that of Theorem 4.1 in \cite{LiuCai1}.
\end{proof}

\section{Physical Partition (PP)}

As seen in \cite{LiuCai1}, the physical partition of the current NN approximation plays a critical role in the ANE method for a two-layer NN. As we shall see, it is essential for our self-adaptive multi-layer NN as well.
For simplicity of presentation, we consider ReLU activation function only in this section. The idea of our procedure for determining the physical partition can be easily extended to other activation functions even though the corresponding geometry becomes complex. 

For any function $v\in \cM_N(\btheta,k)$ with $k\ge 2$, it is easy to see that $v$ is a continuous piece-wise linear function with respect to a partition $\mathcal{K}^{(k-1)}$ of the domain $\Omega$. This partition is referred to as the {\it physical partition} of the function $v$ in $\cM_N(\btheta,k)$. This section describes how to determine the physical partition $\mathcal{K}^{(k-1)}$ of a function in $\cM_N(\btheta,k)$. To this end, for $l=1, \, \cdots ,\,k-1$, denote by $\cK^{(l)}$ the physical partition of the first $l$ layers.

To determine the physical partition $\cK^{(1)}$, notice that a two-layer NN with $n_1$ neurons generates the following set of functions:
\begin{equation}\label{ReLU-n}
 {\cal M}_N({\small\btheta},2) = \left\{\sum_{i=1}^{n_1} \omega^{(2)}_i\sigma(\bomega^{(1)}_i\cdot \bx -b^{(1)}_i) -b^{(2)}\, :\,  
\omega^{(2)}_i,\, b^{(1)}_i,\, b^{(2)}\in \R,\,\, \bomega^{(1)}_i\in \cS^{d-1}\right\},
 \end{equation}
where $\cS^{d-1}$ is the unit sphere in $\R^d$. The ${\cal M}_N({\small\btheta},2)$ may be viewed as an extension of the free-knot spline functions \cite{Schumaker} to multi-dimension, and its free breaking hyper-planes are
 \begin{equation}\label{planes}
 {\cal P}_j:\, \,\bomega_j^{(1)}\cdot \bx-b_j^{(1)}=0
 \quad\mbox{for } j=1,\,...,\,n_1.
 \end{equation}
This suggests that the physical partition $\cK^{(1)}$ is formed by the boundary of the domain $\Omega$ and the hyper-planes $\left\{\bomega^{(1)}_j\cdot \bx-b^{(1)}_j=0\right\}_{j=1}^{n_1}$. 


Next, we describe how to form the physical partition $\cK^{(l)}$. Our procedure is based on the observation that for $l=2,\, ...\, , k-1$, the $\cK^{(l)}$ may be viewed as a refinement of the $\cK^{(l-1)}$.
For $j=1, \, \cdots, \, n_l$, denote the function generated by the $j^{\text{th}}$ neuron at the $l^{\text{th}}$-layer without the activation function by 
\[
g_j^{(l)}(\bx) = \sum _{i=1}^{n_{l-1}} \omega^{(l)}_{ij} \sigma  \left(\bomega^{(l-1)}_i\cdot \bx^{(l-2)}-b^{(l-1)}_i\right) - b^{(l)}_j,
\]
\begin{wrapfigure}{r}{2.2in}
\includegraphics[width=2.2in]{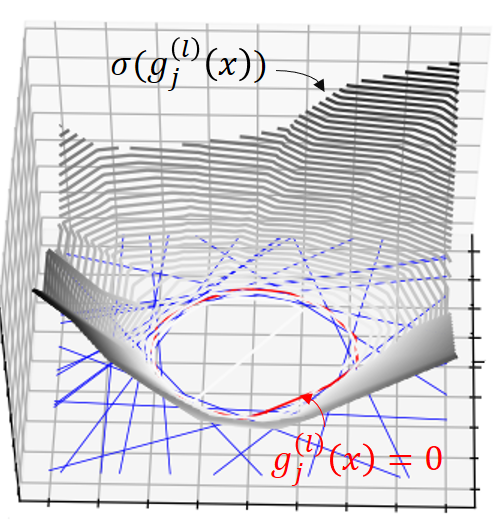} \vspace{-10pt}
\caption{\small Breaking lines generated by the $j^{\text{th}}$ neuron of the $l^{\text{th}}$-layer} 
\label{f_breaklines}
\vspace{-10pt}
\end{wrapfigure}
where $\bx^{(l-2)} = N^{(l-2)} \circ \cdots \circ  N^{(1)}(\bx)$. It is clear that the functions $g_j^{(l)}(\bx)$ are continuous piece-wise linear functions with respect to the physical partition $\cK^{(l-1)}$. The action of the activation function on $g_j^{(l)}(\bx)$, i.e., $\sigma (g_j^{(l)}(\bx)) = \max \{0, g_j^{(l)}(\bx)\}$ for $j=1, \, \cdots, \, n_l$, generate $n_l$ continuous piece-wise linear functions with respect to a refined partition of $\cK^{(l-1)}$. Therefore, the refinement is created by the activation function through replacing negative values of $g_j^{(l)}(\bx)$ by zero (see Fig. \ref{f_breaklines} for illustration). In other words, the refinement is done by all new hyper-planes satisfying
 \begin{equation}\label{new-hplanes}
    g_j^{(l)}(\bx) = 0
    \quad\mbox{for }\,\, j=1, \, \cdots, \, n_l.
 \end{equation}
 
To determine whether or not an element $K\in \cK^{(l-1)}$ is refined, for each $g_j^{(l)}(\bx)$, we compute its values at vertices of $K$. If these values change signs, then the element $K$ is partitioned by the hyper-plane $g_j^{(l)}(\bx)=0$ into two subdomains. It is possible that an element $K\in \cK^{(l-1)}$ may be partitioned by many hyper-planes in (\ref{new-hplanes}). Denote the collection of refined elements in $\cK^{(l-1)}$ by 
 \begin{equation}\label{refined-set}
     \cK^{(l-1)}_r =\left\{ K\in \cK^{(l-1)} \big|\, \exists\,   j_0 \mbox{ such that values of } g_{j_0}^{(l)}(\bx) \mbox{ at vertices of } K \mbox{ change signs} \right\}.
 \end{equation}
Denote by $\cK^{(l)}_K$ the physical partition of element $K\in \cK^{(l-1)}$ by hyper-planes in (\ref{new-hplanes}) and the boundary of $K$. Then the physical partition $\cK^{(l)}$ by the first $l$ hidden layers is given by
 \begin{equation}\label{K-l}
 \cK^{(l)} =\left(\bigcup\limits_{K\in \cK^{(l-1)}_r } \cK^{(l)}_K\right) \bigcup  \left(\cK^{(l-1)}\setminus \cK^{(l-1)}_r \right). 
 \end{equation}
The procedure of determining the physical partition of a function in ${\cal M}_N({\small\btheta},k)$ with $k\ge 3$ is summarized in Algorithm 3.1.
 
\begin{algorithm}\label{phy-partition}
{\bf {\sc \bf Algorithm 3.1}.} Physical Partition.\\
For any function $v\in {\cal M}_N({\small\btheta},k)$ with $k\ge 3$, the partition $\cK^{(1)}$ is determined by the boundary of the domain $\Omega$ and the hyper-planes $\left\{\bomega^{(1)}_i\cdot \bx-b^{(1)}_i=0\right\}_{i=1}^{n_1}$. For $l=2, \,\cdots,\,k-1$, 
\begin{itemize}
    \item[(1)] evaluate $g_j^{(l)}(\bx)$ at vertices of $\cK^{(l-1)}$ for $j=1, \, \cdots, \, n_l$;
    \item[(2)] determine $\cK^{(l-1)}_r$ using (\ref{refined-set});
    \item[(3)] for each $K\in \cK^{(l-1)}_r$, determine  its refinement by hyper-planes whose values change signs at vertices of $K$;
    \item[(4)] $\cK^{(l)}$ is given in (\ref{K-l}).
\end{itemize}
\end{algorithm}

\section{Adaptive Network Enhancement Method} 

Given a target function $f(\bx)$ and a prescribed tolerance $\epsilon>0$ for approximation accuracy, in \cite{LiuCai1} we proposed the adaptive network enhancement (ANE) method for generating a two-layer ReLU NN and a numerical integration mesh such that 
 \begin{equation}\label{tolerance}
     \|f(\cdot)-f_{_\cT}(\cdot ; {{\small\btheta}^*_{_\cT}})\| \leq \epsilon \, \|f\|,
 \end{equation}
where $f_{_\cT}(\bx ; {{\small\btheta}^*_{_\cT}})$ is the solution of the optimization problem in (\ref{L2App-d}) over a two-layer NN with numerical integration defined on the partition $\cT$. 

For the convenience of readers and needed notations, we state Algorithm 5.1 of \cite{LiuCai1} (see Algorithm 4.1 below) for the case that numerical integration based on a partition $\cT$ is sufficiently accurate. In this algorithm, $\cK$ is the physical partition of the current approximation $f_{_\cT}$, $\xi_{_K}= \|f-f_{_\cT}\|_{_{K,\cT}}$ is the local error in the physical subdomain $K\in\cK$, and the network enhancement strategy introduced in \cite{LiuCai1} consists of either the average marking strategy:
 \begin{equation}\label{BV-marking-2}
    \hat{\cK} =\left\{K\in \cK\, :\,
    \xi_{_K}
    \ge \, \dfrac{1}{\#\cK}\sum_{K\in \cK}\xi_{_K}\right\},
\end{equation}
where $\#\cK$ is the number of elements of $\cK$, or the bulk marking strategy: finding a minimal subset $\hat{\cK}$ of $\cK$ such that
\begin{equation}\label{marking-2}
    \sum_{K\in \hat{\cK}} \xi^2_{_K}
    \ge \gamma_1\, \sum_{K\in \cK} \xi^2_{_K}
    \quad\mbox{for }\,\, \gamma_1\in (0,\,1).
\end{equation}
With the subset $\hat{\cK}$, the number of new neurons to be added to the current NN is equal to $\#\hat{\cK}$, the number of elements in $\hat{\cK}$.

\begin{algorithm}\label{ane_old}
{\bf {\sc \bf Algorithm 4.1}.} Adaptive Network Enhancement for a two-layer NN with a fixed $\cT$.\\
Given a target function $f(\bx)$ and a tolerance $\epsilon >0$, starting with a two-layer ReLU NN with a small number of neurons,
 \vspace{2pt}
\begin{itemize}
    \item[(1)] solve the optimization problem in (\ref{L2App-d});
    \item[(2)] estimate the total error by computing $\xi= \left(\sum\limits_{K\in \cK} \xi^2_{_K}\right)^{1/2}\, / \|f\|_{_\cT}$;
     \vspace{2pt}
    \item[(3)] if $\xi< \epsilon$, then stop; otherwise, go to Step (4);
     \vspace{2pt}
    \item[(4)] add $\#\hat{\cK}$ neurons to the network, then go to Step (1). 
\end{itemize}
\end{algorithm}

For continuous functions exhibiting intersecting interface singularities or sharp transitional layer like discontinuities, numerical results in \cite{LiuCai1} showed the efficacy of the ANE method for generating a nearly minimal two-layer NN to learn the target function within the prescribed accuracy. However, in the case that the transitional layer is over a circle but not a straight line, the approximation by a two-layer NN with a large number of neurons exhibits a certain level of oscillation; while the approximation using a three-layer NN with a small number of parameters is more accurate than the former and has no oscillation. Those numerical experiments suggest that a three-layer NN is needed for learning certain type of functions even if they are continuous. 

In this section, we develop the ANE method for a multi-layer NN. To address question (b), i.e., when to add a new layer, we introduce a computable quantity denoted by $\eta_r$ measuring the improvement rate of two consecutive NNs per the relative increase of parameters. If the improvement rate $\eta_r$ is less than or equal to a prescribed expectation rate $\delta \in (0, 2)$, i.e.,
\begin{equation}\label{improve-rate}
    \eta_r \leq \delta,
\end{equation}
for two consecutive ANE runs, then the ANE method adds a new layer. Otherwise, the ANE adds neurons to the last hidden layer of the current network. Here a conservative strategy for adding a new layer is adopted by a double-run to check if inefficiency is identified when enhancing neurons in the current layer.

To define the improvement rate, denote the two consecutive NNs by $\cM_{ \scriptscriptstyle N^{\text{new}}}$ and $\cM_{ \scriptscriptstyle N^{\text{old}}}$, where the subscripts $N^{\text{new}}$ and $N^{\text{old}}$ are the number of parameters of these two NNs, respectively.
Assume that the former is obtained by adding neurons in the last hidden layer of the latter. Let $\xi^{\text{new}}$ and $\xi^{\text{old}}$ be the error estimators of the approximations using $\cM_{ \scriptscriptstyle N^{\text{new}}}$ and $\cM_{\scriptscriptstyle N^{\text{old}}}$, respectively. 
The improvement rate $\eta_r$ is defined as 
\[
\eta_r=\left(\frac{\xi^{\text{old}}-\xi^{\text{new}}}{\xi^{\text{old}}}\right)\Big/\left(\frac{(N^{\text{new}})^r-(N^{\text{old}})^r}{(N^{\text{new}})^r}\right),
\]
where $r$ is the order of the approximation with respect to the number of parameters and may depend on the activation function and the layer.

To determine the number of new neurons to be added in the last (but not first) hidden layer, our network enhancement strategy  starts with the marked subset $\hat{\cK}$ of $\cK$ as in the first layer, where $\cK$ is the physical partition of the current approximation. The subset $\hat{\cK}$ is further regrouped into a new set $\mathcal{C} = \{C:\, C \mbox{ is a connected, open subdomain of } \Omega\}$ such that each element of $\mathcal{C}$ is either an isolated subdomain in $\hat{\cK}$ or a union of connected subdomains in $\hat{\cK}$. Now, the number of new neurons to be added equals to the number of elements in $\mathcal{C}$. This strategy is based on the observation that 
a multi-layer NN is capable of generating piece-wise breaking hyper-planes in connected subdomains by one neuron. Summarizing the above discussion, our network enhancement strategy on adding neurons and layers is described in Algorithm 4.2.

\begin{algorithm}[htbp]\label{ane_new1}
{\bf {\sc \bf Algorithm 4.2}} Network Enhancement Strategy.\\
Given an error estimator $\xi$ and the improvement rate $\eta_r$,
 \vspace{2pt}
\begin{itemize}
    \item[(1)] if  (\ref{improve-rate}) holds for two consecutive ANE runs, add a new hidden layer; otherwise, go to Step (2);
    \vspace{2pt}
    \item[(2)] use the marking strategy in (\ref{BV-marking-2}) or (\ref{marking-2}) to generate $\hat{\cK}$, and regroup $\hat{\cK}$ to get $\cC$;
    \vspace{2pt}
    \item[(3)] if there is only one hidden layer, add $\#\hat{\cK}$ neurons to the first hidden layer; otherwise, add $\#\cC$ neurons to the last hidden layer.
\end{itemize}
\end{algorithm}

Assume that numerical integration on $\cT$ is accurate, then the ANE method for generating a nearly minimal multi-layer neural network is described in Algorithm 4.3.

\begin{algorithm}[htbp]\label{ane_new}
{\bf {\sc \bf Algorithm 4.3}} Adaptive Network Enhancement for a multi-layer NN with a fixed $\cT$.\\
Given a target function $f(\bx)$ and a tolerance  $\epsilon >0$ for accuracy, 
starting with a two-layer NN with a small number of neurons and using one loop of Algorithm 4.1 to generate a two-layer NN, then
 \vspace{2pt}
\begin{itemize}
    \item[(1)] solve the optimization problem in (\ref{L2App-d});
    \item[(2)] estimate the total error by computing $\xi= \left(\sum\limits_{K\in \cK} \xi_{_K}^2\right)^{1/2}\, / \|f\|_{_\cT}$;
     \vspace{2pt}
    \item[(3)] if $\xi< \epsilon$, then stop; otherwise, go to Step (4);
    \vspace{2pt}
    \item[(4)] compute the improvement rate $\eta_r$;
    \vspace{2pt}
    \item[(5)] add a new hidden layer or new neurons to the last hidden layer by Algorithm 4.2, then go to Step (1).
\end{itemize}
\end{algorithm}

\section{Initialization of training (iterative solvers)}
To determine the values of the network parameters, we need to solve the optimization problem in (\ref{L2App-d}), which is non-convex and, hence, computationally intensive and complicated. Currently, this problem is often solved by iterative optimization methods such as gradient descent (GD), Stochastic GD, Adam, etc. (see, e.g., \cite{BoCuNo2018} for a review paper in 2018 and references therein). Since non-convex optimizations usually have many solutions and/or many local minimum, it is then critical to start with a good initial guess in order to obtain the desired solution. 
As seen in \cite{LiuCai1}, the ANE method itself is a natural continuation process for generating good initializations. In this section, we discuss  initialization strategies of the ANE method for a multi-layer NN in two dimensions. Extensions to three dimensions are straightforward conceptually but more complicated algorithmically.

There are three cases that need to be initialized: (1) the beginning of the ANE method for a two-layer NN with a small number of neurons; (2) adding new neurons at the first layer; 
and (3) adding new neurons at the last hidden layer which is not the first layer. Initialization for both cases (1) and (2) was introduced in Section 5 of \cite{LiuCai1}. For the convenience of readers, we briefly describe them below.

The ANE method starts with a two-layer NN with $n_1$ neurons. Denote the input weights and bias by $\bomega^{(1)} = \left(\bomega^{(1)}_1, ..., \bomega^{(1)}_{n_1}\right)^T$ and $\bb^{(1)}=\left(b^{(1)}_1,..., b^{(1)}_{n_1}\right)^T$, respectively; and the output bias and weights by ${\bf c}^{(1)}=\left(b^{(2)},\omega_1^{(2)}, ..., \omega_{n_1}^{(2)}\right)^T$. The initials of $\bomega^{(1)}$ and $\bb^{(1)}$ are chosen such that the hyper-planes 
\[
\cP_i:\, \bomega_i^{(1)}\cdot \bx -b_i^{(1)}=0
\quad\mbox{for } i=1,...,n_1
\]
partition the domain uniformly. With the initial   $\btheta^{(1)} = (\bomega^{(1)}, {\bf b}^{(1)})$ prescribed above, let 
\[
\varphi_{0}^{(1)}(\bx)=1
\quad\mbox{and}\quad 
\varphi_{i}^{(1)}(\bx)=\sigma(\bomega_i^{(1)}\cdot \bx -b_i^{(1)})
\quad\mbox{for } i=1,...,n_1.
 \]
Then the initial of ${\bf c}^{(1)}$ is given by the solution of the following system of linear algebraic equations
\begin{equation}\label{O_W}
    \bm{M}(\btheta^{(1)})\, \bc^{(1)} = F(\btheta^{(1)}),
\end{equation}
where the coefficient matrix $\bm{M}(\btheta^{(1)})$ and the right-hand side vector $F(\btheta^{(1)})$ are given by
\[
\bm{M}\left(\btheta^{(1)}\right)=\left(\big(\varphi_{j}^{(1)}(\bx), \varphi_{i}^{(1)}(\bx)\big)\right)_{(n_1+1)\times (n_1+1)}
\quad\mbox{and}\quad
F\left(\btheta^{(1)}\right)=\left(\big(f, \varphi_{i}^{(1)}(\bx)\big)\right)_{(n_1+1)\times 1},
\]
respectively.

When adding new neurons at the first layer, the parameters associated with the old neurons will inherit the current approximation as their initials and those of the new neurons are initialized through the corresponding 
hyper-planes. Each new neuron is related to a 
sub-domain $K\in \hat{\cK}$ (see Algorithm 4.1) and is initialized by setting its corresponding hyper-plane to pass through the centroid of $K$ and orthogonal to the direction vector with the smallest variance of quadrature points in $K$. For details, see Section 5 of \cite{LiuCai1}.

In the case (3), new neurons are added either at a new layer or at the current but not the first layer. As in the case (2), the parameters of the old neurons are initialized with their current approximations. Below we describe our strategy on how to initialize newly added neurons. 
First, consider the case in which we add neurons to start a new layer. Assume that the current NN has $k-1$ hidden layers. By Algorithm 4.2, the number of new neurons to be added at the $k^{\text{th}}$ hidden layer equals to the number of elements in $\mathcal{C}^{(k-1)}$. For each element $C\in \mathcal{C}^{(k-1)}$, one neuron is added to the $k^{\text{th}}$ hidden layer, and its output weight is randomly initialized. Below we introduce a strategy to  initialize its bias and weights, $\bomega^{(k)}=\left(b^{(k)},\, \omega_1^{{(k)}}, \cdots, \omega^{{(k)}}_{n_{k-1}}\right)^T= \left(\omega_0^{{(k)}},\, \omega_1^{{(k)}}, \cdots, \omega^{{(k)}}_{n_{k-1}}\right)^T$. To this end, let us introduce a corresponding output function of the neuron to be added to refine $C$,
\[
l_C(\bx) = b^{(k)} + \sum^{n_{k-1}}_{i=1} \omega_i^{{(k)}} \sigma\left(\bomega_i^{(k-1)} \cdot \bx^{(k-2)} -b_i^{(k-1)}\right)
\equiv \sum\limits^{n_{k-1}}_{i=0}\omega_i^{\small{(k)}}\varphi_{i}^{(k-1)}(\bx),
\]
where the functions $\{ \varphi^{(k-1)}_i(\bx)\}_{i=0}^{n_{k-1}}$ are given by
\[
\varphi_{0}^{(k-1)}(\bx)=1
\quad\mbox{and}\quad 
\varphi_{i}^{(k-1)}(\bx)=\sigma\left(\bomega_i^{(k-1)} \cdot \bx^{(k-2)} -b_i^{(k-1)}\right).
 \] 
Note that $C\in \mathcal{C}^{(k-1)}$ is either an isolated physical subdomain or consists of several connected physical sub-domains in $\hat{\cK}^{(k-1)}$.

\begin{figure}[htbp]
\centerline{\includegraphics[width=3.5in]{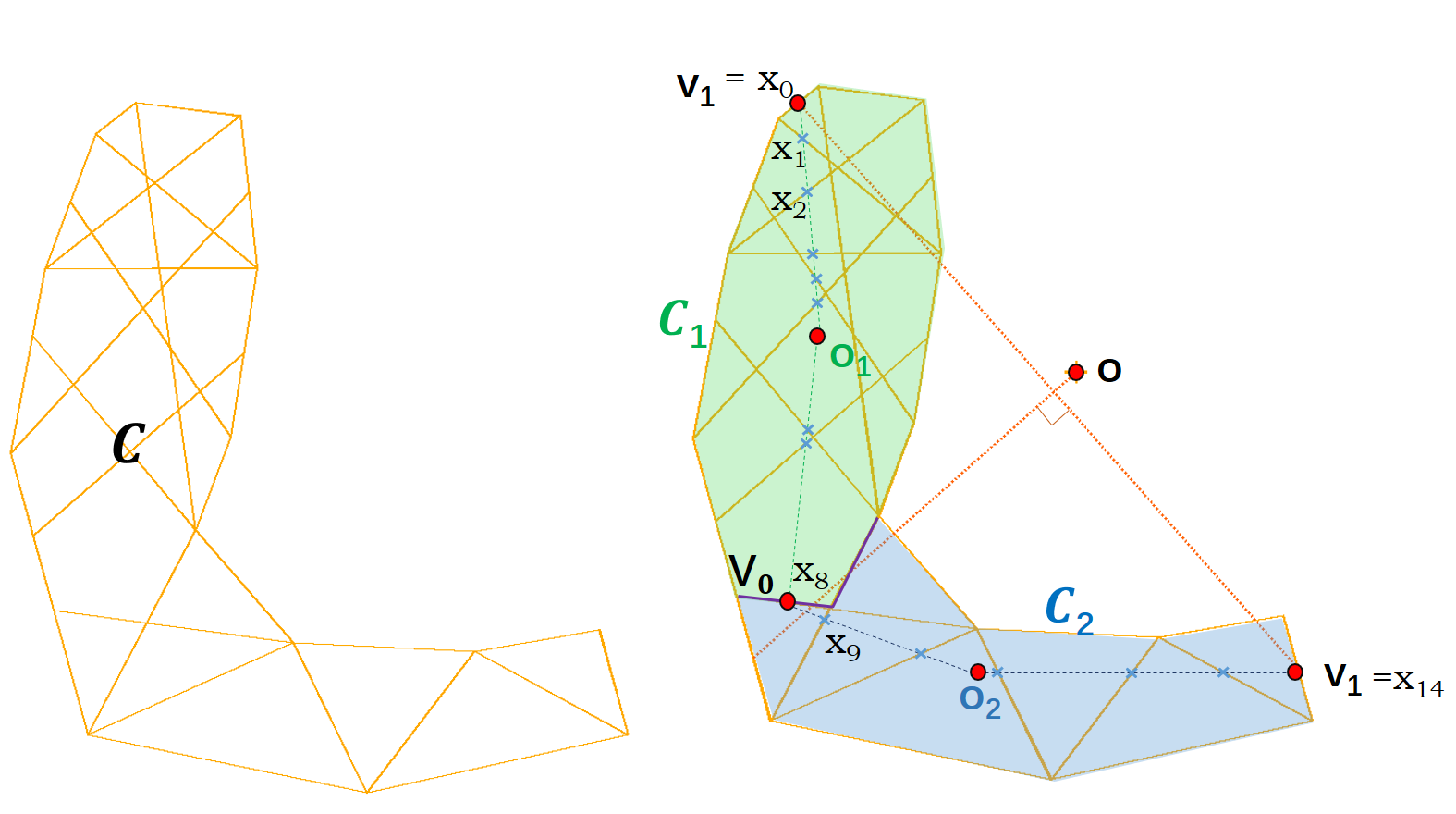}}
\caption{\small A heuristic method for initializing a neuron at the last hidden layer}
\label{f_nn_initial}
\end{figure}

A heuristic method is introduced here to initialize a neuron such that its corresponding break poly-lines can divide the sub-domains in $C$ as many as possible (please refer to Fig. \ref{f_nn_initial} for a graphical illustration): 
\begin{itemize}
    \item step 0: Initialize an empty set $X=\emptyset$; 
    \item step 1: Compute pairwise distances among mid-points on the boundary edges of $C$, find the point pair $(\bv_1, \bv_2)$ having the longest distance;
    \item step 2: Compute the centroid $\bo$ of $C$;
    \item step 3: If $\bo \in C$, find the set of intersection points of the edges in $C$ and the two line segments $\overline{\bo\bv_1}$ and $\overline{\bo\bv_2}$. Add the set of intersection points into $X$; 
    \item step 4: If $\bo \notin C$, decompose $C$ into two sub-regions $C_1$ and $C_2$ by the line passing through $\bo$ and perpendicular to $\overline{\bv_1\bv_2}$. Along the boundary edges of $C_1$ and $C_2$, locate a mid-point $\bv_0$ which has the largest distance sum to $\bv_1$ and $\bv_2$;  
    \item step 5: For each sub-region, use $(\bv_0, \bv_1)$ and $(\bv_0, \bv_2)$ respectively as the farthest point pair, repeat step 2-5 recursively until all sub-regions have their centroids located inside the region. 
\end{itemize}

The above procedure returns a point set $X = \{\bx_0, ..., \bx_{m}\}$.
Now, a reasonable initial is to choose $\{ \omega_i^{{(k)}}\}_{i=0}^{n_{k-1}}$ so that the corresponding function $l_C(\bx)$ vanishes at $\bx_j$ for all $0\leq j\leq m$, i.e., 
\begin{equation}\label{l_C}
    0=l_C(\bx_j) = \sum\limits^{n_{k-1}}_{i=0}\omega_i^{\small{(k)}}\varphi_{i}^{(k-1)}(\bx_j)
    =\bl_j^T \bomega^{(k)}
    \quad\mbox{for }\, j=0, 1, ..., m,
\end{equation}
where $\bx^{(0)}_j\!=\bx_j$, $\bx^{(k-2)}_j\! = N^{(k-2)} \circ \cdots \circ  N^{(1)}(\bx_j)$ for $k>2$, and $\bl_j\!=\!\left(\!1, \varphi_{1}^{(k-1)}(\bx_j),\cdots , \varphi_{n_{k-1}}^{(k-1)}(\bx_j)\!\right)^T$.
When $m< n_{k-2}$, any nontrivial solution of (\ref{l_C}) may serve as an initial of $\bomega^{(k)}$. When $m\ge n_{k-2}$, (\ref{l_C}) becomes an over-determined system and may not have a solution. In that case, we can choose a smaller $\gamma_1$ value in (\ref{marking-2}) so that fewer number of elements are marked. 

When neurons are added to the current layer in the case (3), 
the initialization procedure described above needs to be changed as follows. Note that each $C\in \mathcal{C}^{(k-1)}$
may be identified as a subset of $\tilde{C}$ consisting of $m$ connected physical sub-domains in $\mathcal{K}^{(k-2)}$. This implies that the $\{\bx_j\}_{j=0}^m$ in (\ref{l_C}) should be chosen based on the physical subdomains of $\tilde{C}$ in $\mathcal{K}^{(k-2)}$.


\section{Numerical Results for Learning Function}
This section presents numerical results of the ANE method for learning a given function through the least-squares loss function. The test problem is a function defined on the domain $\Omega = [-1,1]^2$ given by
\begin{equation}\label{test3}
f(x,y) = \tanh\left(\frac{1}{\alpha}(x^2 +y^2 - \frac{1}{4})\right) - \tanh\left(\frac{3}{4\alpha}\right),
 \end{equation}
which exhibits a sharp transitional layer across a circular interface for small $\alpha$. 
This test problem was used in \cite{LiuCai1} to motivate the ANE method for generating a multi-layer neural network. To learn $f(x,y)$ accurately, we show numerically that it is necessary to use at least a three-layer NN. The structure of a two- or three-layer NN is expressed as 2-$n_1$-1 or 2-$n_1$-$n_2$-1, respectively, where $n_i$ is the number of neurons at the $i^{\text{th}}$ hidden layer. 

In this experiment, we set $\alpha=0.01$ and the corresponding function $f$ is depicted in Fig. \ref{transition_adaptive} (a); a fixed $200\times 200 $ quadrature points are uniformly distributed in the domain $\Omega$; we use the bulk marking strategy defined in (\ref{marking-2}) with $\gamma_1 = 0.5$; and we choose the expectation rate $\delta =0.6$ with $r=1$ in (\ref{improve-rate}) and the tolerance $\epsilon=0.05$. The ANE method starts with a two-layer NN of 12 neurons, and the corresponding breaking lines $\{\cP_i\}_{i=1}^{12}$ are initialized uniformly. Specifically, half of breaking lines are parallel to the $x$-axis
\[\bomega_i^{(1)}=0 \quad \text{and}\quad b_i^{(1)}=-1+\frac{1}{3}i \quad\text{for}\quad i=0,\cdots, 5
\]
and the other half are parallel to the $y$-axis
\[\bomega_i^{(1)}=\pi/2 \quad \text{and}\quad b_i^{(1)}=-1+\frac{1}{3}(i-6) \quad\text{for}\quad  i=6,\cdots, 12.
\]
In addition, the output weights and bias are initialized by solving the linear system in (\ref{O_W}).


For each iteration of the ANE method, the corresponding minimization problem in (\ref{L2App-d}) is solved iteratively by the Adam version of gradient descent \cite{kingma2015} with a fixed learning rate $0.005$. The Adam's iterative solver is terminated when the relative change of the loss function $\|f-\hat{f}\|_{_\cT}$ is less than $10^{-3}$ during the last 2000 iterations. 

\begin{table}[htb!]
\caption{Adaptive numerical results for function with a transitional layer}
\label{transition_adaptive_result}
\centering
\begin{tabular}{  |l |c | c | c | c |  c| c|c| }
	\hline
	\makecell{Network structure}  & \# parameters & \makecell{Training accuracy\\ $\|f-\hat{f}\|_{_\cT}/ \|f\|$} 
	&\makecell{Improvement rate \\ $\eta$}\\[1mm] \hline
2-12-1 & 37 &0.357414 & --\\ \hline 
\textbf{2-18-1} & \textbf{55} &\textbf{0.323118} & \textbf{0.293198}\\ \hline
2-26-1 & 93 &0.272614 & 0.382528\\ \bottomrule
\textbf{2-18-5-1} &\textbf{137}  & \textbf{0.025483} &\textbf{1.538967}\\ \hline 
\end{tabular}
\end{table}

\begin{figure}[htbp]
\centering
\subfigure[The target function $f$ with a circular \newline transitional layer]{
\begin{minipage}[t]{0.45\linewidth}
\centering
\includegraphics[width=2.4in]{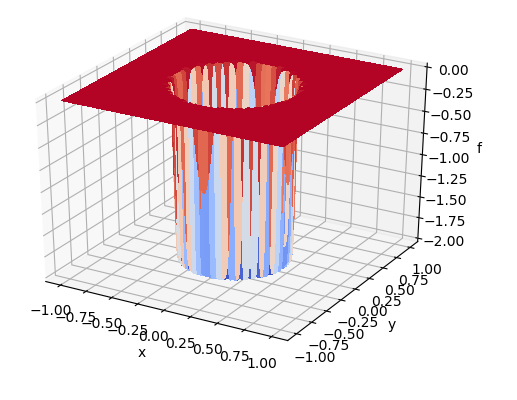}
\end{minipage}%
}%
\subfigure[PP of the approximation using 2-12-1 NN and centers of the marked elements (red dots)]{
\begin{minipage}[t]{0.45\linewidth}
\centering
\includegraphics[width=2.6in]{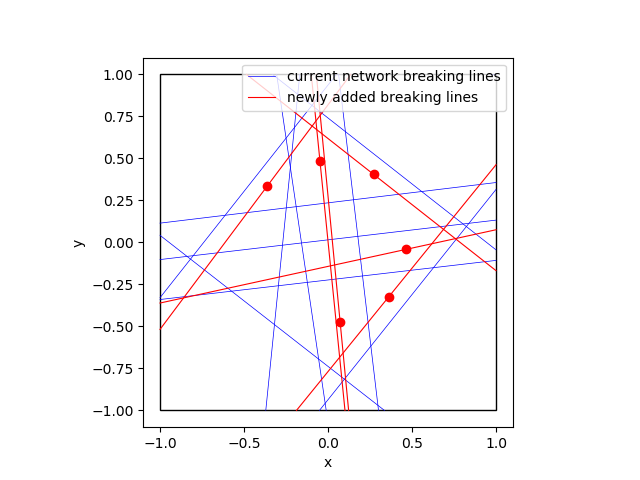}
\end{minipage}%
}%
\\
\centering
\subfigure[PP by 2-18-1 NN and isolated and connected sub-domains (dots)
]{
\begin{minipage}[t]{0.3\linewidth}
\centering
\includegraphics[width=1.7in]{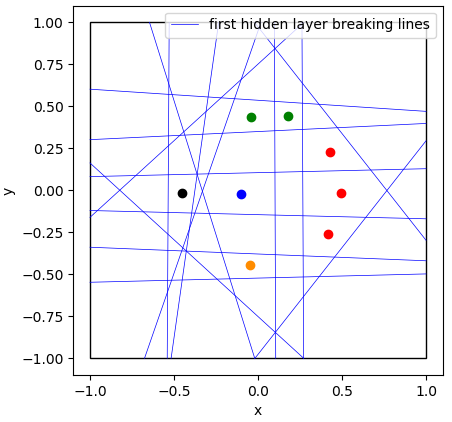}
\end{minipage}%
}%
\subfigure[PP by adaptive 2-18-5-1 NN]{
\begin{minipage}[t]{0.3\linewidth}
\centering
\includegraphics[width=1.72in]{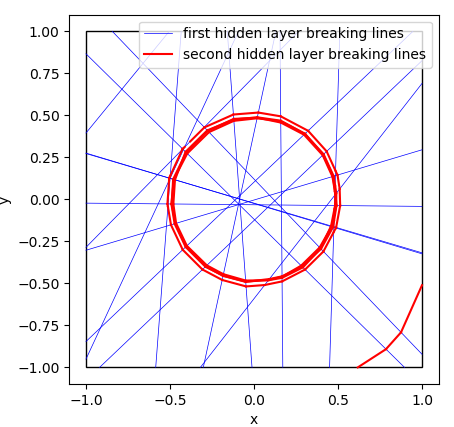}
\end{minipage}%
}%
\subfigure[Approximation using adaptive \newline 2-18-5-1 NN]{
\begin{minipage}[t]{0.35\linewidth}
\centering
\includegraphics[width=2.05in]{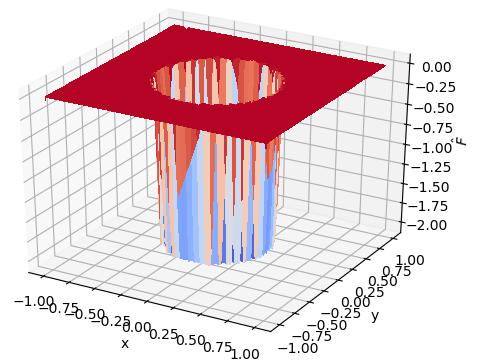}
\end{minipage}%
}%
\caption{Adaptive approximation results for function with a transitional layer}
\label{transition_adaptive}
\end{figure}

The ANE process is automatically terminated after four loops (see Table \ref{transition_adaptive_result}), and the final model of NN generated by the ANE is 2-18-5-1 with $137$ parameters.
The best least-squares approximation of the final NN model and the corresponding physical partition are depicted in Figs. \ref{transition_adaptive} (e) and (d). 
Clearly, the ANE method, using a relatively very small number of degrees of freedom, is capable of accurately approximating a function with thin layer without oscillation. This striking approximation property of the ANE method may be explained by the fact that the circular interface of the underlying function is captured very well by a couple of piece-wise breaking poly-lines of the approximation
generated by {\it the second layer}. 

Figs. \ref{transition_adaptive} (b)-(c) depict the physical partitions of the approximations at the intermediate NNs. In Fig \ref{transition_adaptive} (b), centers of the marked elements are illustrated by red dots; the breaking lines corresponding to the old and new neurons are displayed  by blue and red lines, respectively. Table \ref{transition_adaptive_result} shows that the adaptive network enhancement is done first at the current layer and then ended at the second hidden layer, because the improvement rates are smaller than the expectation rate for two consecutive network enhancement steps. Fig \ref{transition_adaptive} (c) shows that there are $8$ marked sub-domains and $5$ connected sub-domains, which explains only $5$ neurons are added at the second hidden layer.


\begin{table}[htb!]
\caption{Numerical results of adaptive and fixed NNs for function with a transitional layer}
\label{transition_fixed_result}
\centering
\begin{tabular}{  |l |c | c | c | c | c|c| }
	\hline
	\makecell{Network structure}  & \# parameters & \makecell{Training accuracy\\ $\|f-\hat{f}\|_{_\cT}/ \|f\|$} \\[1mm] \hline
2-18-5-1 (Adaptive) &137  & 0.025483\\ \hline 
2-18-5-1 (Fixed) & 137 &0.046199   \\ \hline
2-174-1 (Fixed) & 523 &0.111223 \\ \hline
\end{tabular}
\end{table}

\begin{figure}[htbp]
\centering
\subfigure[Approximation using fixed 2-174-1 NN]{
\begin{minipage}[t]{0.45\linewidth}
\centering
\includegraphics[width=2.6in]{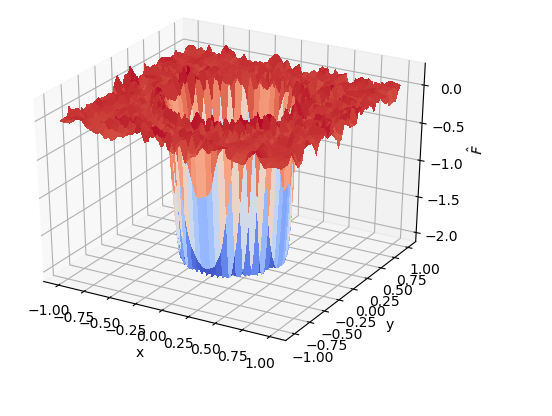}
\end{minipage}%
}%
\subfigure[PP of the approximation by 2-174-1 NN and centers of elements with large errors (red)]{
\begin{minipage}[t]{0.45\linewidth}
\centering
\includegraphics[width=2.6in]{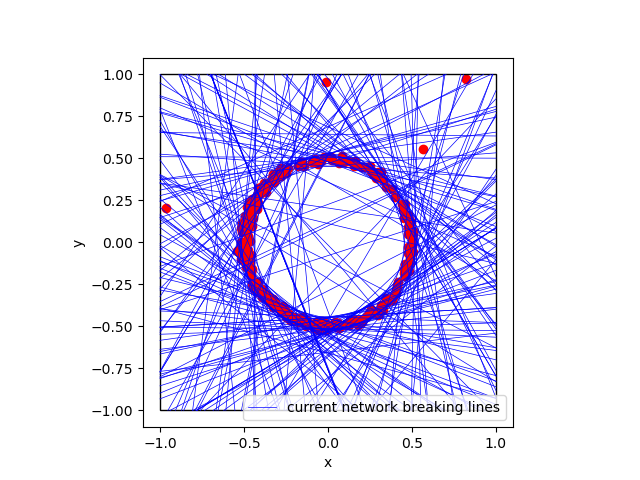}
\end{minipage}%
}%
\caption{Approximation results generated by a fixed {\em 2-174-1} NN for function with a transitional layer}
\label{trainsition_fixed_figure}
\end{figure}


For the purpose of comparison, in Table \ref{transition_fixed_result} we also report numerical results produced by two fixed NN models. With the same architecture of NN, the first two rows of Table \ref{transition_fixed_result} imply that the adaptive NN obtains a better training result than the fixed NN. This suggests that the ANE method does provide a good initialization. The second experiment uses a fixed one hidden layer with nearly four times more parameters than the adaptive NN; its approximation is less accurate (see the third row of Table \ref{transition_fixed_result}) and exhibits a certain level of oscillation (see Fig. \ref{trainsition_fixed_figure} (a)) which is not acceptable in many applications. Despite that the corresponding physical partition (see Fig. \ref{trainsition_fixed_figure}(b)) does capture the circular interface, it is too dense in the region where the function does not have much fluctuation. This experiment indicates that a three layer NN is necessary for approximating a function with thin layer. 

\section{Application to PDEs}

The ANE method introduced in this paper can be easily applied for learning solutions of partial differential equations. As an example, we demonstrate its application to the linear advection-reaction problem with discontinuous solution in this section. 

\subsection{Linear advection-reaction problem}

Let $\Omega$ be a bounded domain in ${\R}^d$ with
Lipschitz boundary, and $\bm{\beta}(\bx) = (\beta_1, \cdots, \beta_d)^T\in C^1(\bar{\Omega})^d$ be the advective velocity field. Denote the inflow part of the boundary $\partial \Omega$ by
\[
\Gamma_- = \{\bx\in\Gamma :\, \bm{\beta}(\bx) \cdot \bm{n}(\bx) <0\},
\]
where $\bm{n}(\bx)$ is the unit outward normal vector to $\Gamma_-$ at $\bx\in \Gamma_-$.
Consider the following linear advection-reaction equation 
\begin{equation}\label{pde}
    \left\{\begin{array}{rccl}
    u_{\bm\beta} + {\gamma}\, u &=&f &\text{ in }\, \Omega,\\[2mm]
    u&=&g &\text{ on }\,\, \Gamma_{-},
    \end{array}\right.
\end{equation}
where $u_{\bm\beta} = \bm{\beta}\cdot \nabla v$ is the directional derivative along the advective velocity field $\bm{\beta}$; and $\gamma \in C(\bar{\Omega})$, $f \in L^2(\Omega)$, and $g \in L^2(\Gamma_-)$ are given scalar-valued functions. 

Introduce the solution space of (\ref{pde}) and the associated norm as follows
\[V_{\bm\beta} = \{v\in L^2(\Omega): v_{\bm{\beta}}\in L^2(\Omega)\} \quad \text{and} \quad \vertiii{v}_{\bm\beta}= \left(\|v\|_{0,\Omega}^2 + \|v_{\bm\beta}
 \|_{0,\Omega}^2 \right)^{1/2},
\]
respectively. Define the least-squares functional by
 \begin{equation}\label{ls}
    \mathcal{L}(v;{\bf f}) = \|v_{\bm\beta} +{\gamma}\, v-f\|_{0,\Omega}^2 +  \|v-g\|_{-\bm\beta}^2 
\end{equation}
for all $v\in V_{\bm\beta}$, where ${\bf f} = (f,g)$ and the weighted norm over the inflow boundary is defined by
 \[
 \|v\|_{-\bm{\beta}} 
 =\left<v,v\right>^{1/2}_{-\bm{\beta}} 
 =\left( \int_{\Gamma_-} |\bm{\beta}\! \cdot \!\bm{n}|\, v^2\,ds\right)^{1/2}.
 \]
Now, the least-squares formulation of (\ref{pde}) (see, e.g., \cite{bochev2001improved, de2004least}) is to find $u\in V_{\bm\beta}$ such that 
\begin{equation}\label{LS}
    \mathcal{L}(u;{\bf f}) = \min\limits_{v\in V_{\bm\beta}} \mathcal{L}(v;{\bf f}).
\end{equation}

Assume that there exist a positive constant $\gamma_0$ such that
\begin{equation}\label{gamma}
    \gamma (\bx) - \frac{1}{2}\nabla \cdot \bm{\beta} (\bx) \geq \gamma_0 >0\quad  \text{ for all } \bx\in \Omega.
\end{equation}
It then follows from the trace, triangle, and Poincar\'{e} inequalities that the homogeneous LS functional $\mathcal{L}(v;{\bf 0})$ is equivalent to the norm $\vertiii{v}_{\bm\beta}^2$, i.e., there exist positive constants $\alpha$ and $M$ such that 
\begin{equation}\label{equiv}\alpha\, \vertiii{v}_{\bm\beta}^2 \leq \mathcal{L}(v;{\bf 0}) \leq M\, \vertiii{v}_{\bm\beta}^2.\end{equation}

\subsection{LSNN method and a posteriori error estimator}
 
Denote by $\cM_N({\small\btheta},l)$ the set of DNN functions as in Section 2. Let $\mathcal{T}$ be a partition of the domain $\Omega$ and ${\cal E}_{-}$ as a partition of the inflow boundary $\Gamma_-$. Let $\bx_{_K}$ and $\bx_{_E}$ be the centroids of $K\in {\cal T}$ and $E\in {\cal E}_-$, respectively. Then the least-squares neural network (LSNN) method introduced in 
\cite{LiuCai2} is to find $u_{_{\small {\cal T}}}^N(\bx,{\small\btheta}^*) \in \cM_N({\small\btheta},l)$ such that
\begin{equation}\label{lsnn_minimization}
\mathcal{L}_{_{\small {\cal T}}} \big({u}^N_{_{\small {\cal T}}}(\bx,{\small\btheta}^*);{\bf f}\big) 
  = \min\limits_{v\in \cM_N({\small\btheta},l)} \mathcal{L}_{_{\small {\cal T}}}\big(v(\bx;{\small\btheta});\,{\bf f}\big)
 = \min_{{\scriptsize \btheta}\in\R^{N}}\mathcal{L}_{_{\small {\cal T}}} \big(v(\bx; {\small\btheta});{\bf f}\big).
\end{equation}
where the discrete LS functional is given by
\[\mathcal{L}_{_{\small {\cal T}}}\big(v(\bx; {\small\btheta});{\bf f}\big) 
     = \sum_{K \in {\cal T}} \big(v_{\bm\beta} +{\gamma}\, v-f \big)^2(\bx_{_K}; {\small\btheta})\,|K|+  \sum_{E\in {\cal E}_-} \big(|\bm{\beta} \cdot \bm{n}|(v-g)^2\big)(\bx_{_E}; {\small\btheta})|E|.
\]
Here, $|K|$ and $|E|$ are the $d$ and $d-1$ dimensional measures of $K$ and $E$, respectively.

There are two key components in applying the ANE method: (a) an {\it a posteriori} error estimator for determining if the current approximation is within the prescribed tolerance and (b) {\it a posteriori} error indicators for determining how many new neurons to be added at either width or depth. 
As a gift from the LS principle, the value of the least-squares functional at the current approximation is a good {\it a posteriori} error estimator. Specifically, let $u_k\in \cM_N({\small\btheta},l)$ be the LSNN approximation at the current network and $u$ be the exact solution of (\ref{pde}), then the estimator is given by 
\begin{equation}\label{xi}
    \xi\equiv \sqrt{\cL_{_\cT}(u_k;\bff)}=\cL^{1/2}_{_\cT}(u_k;\bff).
\end{equation}
To estimate the relative error, we may use 
\[
\xi_{\text{rel}}=\dfrac{\cL^{1/2}_{_\cT}(u_k;\bff)}{\cL^{1/2}_{_\cT}(u_k;\bzero)}.
\]

\begin{lemma}
The estimator $\xi$ satisfies the following reliability bound:
\begin{equation}\label{reliability}
    \vertiii{u-u_k}_{\bm\beta}\leq 
 \dfrac{1}{\sqrt{\alpha}}\, \sqrt{\cL(u_k;\bff)} \leq \dfrac{1}{\sqrt{\alpha}}\,\xi + \mbox{h.o.t.},
\end{equation}
where h.o.t. means a higher order term.
\end{lemma}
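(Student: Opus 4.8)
The plan is to exploit the defining feature of the least-squares formulation: the functional $\cL(\cdot;\bff)$ is the squared residual of (\ref{pde}), so its value at $u_k$ is controlled from below by the error through the coercivity estimate (\ref{equiv}). The proof therefore splits into an exact algebraic identity, an application of coercivity, and a quadrature comparison.

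First I would establish the identity
\[
\cL(u-u_k;\bzero) = \cL(u_k;\bff).
\]
Since $u$ is the exact solution of (\ref{pde}), it satisfies $u_{\bm\beta}+\gamma u = f$ in $\Omega$ and $u=g$ on $\Gamma_-$, so in particular $\cL(u;\bff)=0$. Using that the map $v\mapsto v_{\bm\beta}+\gamma v$ and the trace are linear, substituting $v=u-u_k\in V_{\bm\beta}$ into the homogeneous functional $\cL(\cdot;\bzero)$ of (\ref{ls}) gives, in $\Omega$,
\[
(u-u_k)_{\bm\beta}+\gamma(u-u_k) = f-\big((u_k)_{\bm\beta}+\gamma u_k\big) = -\big((u_k)_{\bm\beta}+\gamma u_k - f\big),
\]
and on $\Gamma_-$, $(u-u_k)=g-u_k=-(u_k-g)$. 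Squaring and integrating, the two terms reproduce exactly the interior residual norm and the inflow boundary norm that make up $\cL(u_k;\bff)$, which yields the identity.

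Next I would invoke the lower (coercivity) half of the norm equivalence (\ref{equiv}) applied to $v=u-u_k$:
\[
\alpha\,\vertiii{u-u_k}_{\bm\beta}^2 \leq \cL(u-u_k;\bzero) = \cL(u_k;\bff).
\]
Taking square roots immediately gives the first inequality, $\vertiii{u-u_k}_{\bm\beta}\leq \alpha^{-1/2}\sqrt{\cL(u_k;\bff)}$.

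Finally, for the second inequality I would compare the continuous functional $\cL(u_k;\bff)$ with the computable discrete estimator $\xi=\sqrt{\cL_{_\cT}(u_k;\bff)}$ of (\ref{xi}). Writing
\[
\cL(u_k;\bff) = \cL_{_\cT}(u_k;\bff) + \big(\cL-\cL_{_\cT}\big)(u_k;\bff)
\]
and using $\sqrt{a+b}\le\sqrt{a}+\sqrt{|b|}$ (valid since both $\cL_{_\cT}(u_k;\bff)$ and $\cL(u_k;\bff)$ are nonnegative), I obtain $\sqrt{\cL(u_k;\bff)}\le \xi + \sqrt{|(\cL-\cL_{_\cT})(u_k;\bff)|}$, and I absorb the quadrature remainder into the term written as h.o.t. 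I expect this last step to be the main obstacle, since it requires arguing that the midpoint/centroid quadrature error on the partition $\cT$ is genuinely of higher order than $\xi$. Here one uses that $u_k\in\cM_N(\btheta,l)$ is continuous and piecewise polynomial with respect to its physical partition, so that the integrands $\big((u_k)_{\bm\beta}+\gamma u_k-f\big)^2$ and $|\bm\beta\cdot\bm{n}|(u_k-g)^2$ are piecewise smooth; a standard quadrature error bound then scales the remainder by a positive power of the mesh size of $\cT$, making it negligible relative to $\xi$ once $\cT$ is sufficiently fine. This is where the regularity of $\gamma$, $f$, $g$ and the piecewise smoothness of $u_k$ on each element enter, and care is needed near the breaking hyper-planes where $u_k$ is only continuous.
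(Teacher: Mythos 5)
Your proposal is correct and follows essentially the same route as the paper's own proof: the identity $\cL(u_k;\bff)=\cL(u-u_k;\bzero)$ combined with the lower bound in (\ref{equiv}) gives the first inequality, and the decomposition $\cL(u_k;\bff)=\cL_{_\cT}(u_k;\bff)+\mbox{h.o.t.}$ gives the second. The only difference is that you spell out the details the paper leaves implicit (the residual identity via linearity, and the $\sqrt{a+b}\leq\sqrt{a}+\sqrt{|b|}$ step for absorbing the quadrature error), which is a faithful elaboration rather than a different argument.
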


\begin{proof}
The first inequality in (\ref{reliability}) is a direct consequence of the lower bound in (\ref{equiv}) and the fact that $\cL(u_k;\bff) = \cL(u-u_k;\bzero)$. The second inequality in (\ref{reliability}) follows from the fact that $\cL(u_k;\bff) = \cL_{_\cT}(u_k;\bff) + \mbox{h.o.t.}$ This completes the proof of the lemma.
\end{proof}

To define the local error indicators, we make use of the physical partition $\mathcal{K}^{(l-1)}=\{K\}$ of the current approximation (see Section 3). For each $K \in \mathcal{K}^{(l-1)}$, the indicator $\xi_K$ is defined by 
\begin{equation}\label{lsnn_estimator}
   \xi_K = \left(\|\left(u_k\right)_{\bm\beta} +{\gamma}\, u_k-f\|_{0,K}^2 +  \int_{\Gamma_-\cap \partial K} |\bm{\beta}\! \cdot \!\bm{n}|\, u_k^2\,ds\right)^{1/2}.
\end{equation}

\subsection{Numerical experiment}

In this section, we report numerical results for two test problems: (1) constant jump over two line segments and (2) non-constant jump over a straight line. In \cite{Cai2021linear}, we showed theoretically that a NN with at least two hidden layers is needed in order to accurately approximate their solutions. The purpose of this section is to demonstrate the efficacy of the ANE method for generating a nearly minimal NN to learn solutions of PDEs.

In both experiments, the integration is evaluated on a uniform partition of the domain with $100\times 100$ points. The prescribed expectation rate in (\ref{improve-rate}) is set at $\delta =0.6$ with $r=1$. For the iterative solver, a fixed learning rate $0.003$ and the same stopping criterion as that in Section 6 are used. Finally, the ANE method starts at a two-layer NN with initialization described in Section 6.

\subsubsection{Constant jump over two line segments}
The first test problem is the problem in (\ref{pde}) defined on $\Omega =(0,1)^2$ with $\gamma =f =0$ and a piece-wise constant advection velocity field
\[
\bm{\beta} =\left\{ \begin{array}{rclll}
&(1-\sqrt2,1)^T, & (x,y)\in \Upsilon_1=\{(x,y)\in\Omega:\, y<x\}, \\[2mm]
&(-1,\sqrt2-1)^T, & (x,y)\in \Upsilon_2=\{(x,y)\in\Omega:\, y\ge x\}.
 \end{array}\right.
\]
Denote the inflow boundary and its subset by
\[
\Gamma_-=\{(x,0):\, x\in (0,1)\} \cup \{(1,0)\}\cup \{(1,y):\, y\in (0,1)\}
\quad\mbox{and}\quad
\Gamma^1_-=\{(x,0): x\in (0,43/64)\},
\]
respectively. For the inflow boundary condition
\[
g(x,y)=\left\{ \begin{array}{rl}
 -1,& (x,y)\in \Gamma^1_-, \\[2mm]
 1, & (x,y)\in \Gamma^2_-=\Gamma_-\setminus \Gamma_-^1,
 \end{array}\right.
\]
the exact solution of the problem is $u=-1$ in $\Omega_1$ and $u=1$ in $\Omega_2=\Omega\setminus\bar{\Omega}_1$, where
 \[
 \Omega_1=\cup_{i=1}^2
 \{\bx\in\Upsilon_i: \bxi_i \cdot \bx < 43/64\},
 \,\, \bxi_1=(1, \sqrt{2}-1)^T,\mbox{ and }  \bxi_2=(\sqrt{2}-1, 1)^T.
\]
As depicted in Fig. \ref{transport_adaptive_fig}(a), the discontinuity of the solution is along two line segments. 

\begin{table}[htb]
\caption{Adaptive numerical results for the solution with a constant jump over two line segments}
\label{transport_adaptive_table}
\centering
\begin{tabular}{  |l |c |c | c | c |  c| c }
	\hline
	Network structure &
	$\#$ parameters & 
	${\dfrac{\|u-\bar{u}_{\tau}\|_0}{\|u\|_0}}$ &  
    \makecell{$\xi_{\text{rel}}=\frac{\mathcal{L}^{1/2}(\bar{u}_{\tau};\bf f)}{\mathcal{L}^{1/2}(\bar{u}_{\tau};\bf 0)}$} &
	\makecell{Improvement rate \\ $\eta$}\\[1mm] \hline
	2-6-1 &19  &0.543526 &0.462477  & --  \\ \hline 
	\textbf{2-7-1}&\textbf{22}  &\textbf{0.541213} &\textbf{0.449957}  &\textbf{0.328366}    \\ \hline 
	2-8-1&25  &0.545274 &0.449094  &0.022159    \\ \bottomrule
	2-7-1-1 &24  &0.515736 &0.401161  &2.120618   \\ \hline 
	2-7-2-1 &33  &0.510399 &0.391292  &0.159051   \\ \hline 
	2-7-3-1 &42  &0.113705 &0.066804  &4.510882   \\ \hline
	\textbf{2-7-4-1} &\textbf{51}  &\textbf{0.105822} &\textbf{0.019171}  &\textbf{5.197913} \\ \hline
\end{tabular}
\end{table}

\begin{figure}[htbp]
\centering
\subfigure[Exact solution $u$]{
\begin{minipage}[t]{0.32\linewidth}
\centering
\includegraphics[width=1.8in]{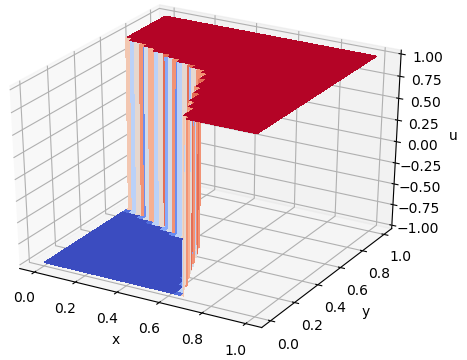}
\end{minipage}%
}%
\subfigure[PP by 2-6-1 NN, the marked \newline element (red dot), and new breaking \newline line (red line)]{
\begin{minipage}[t]{0.31\linewidth}
\centering
\includegraphics[width=1.6in]{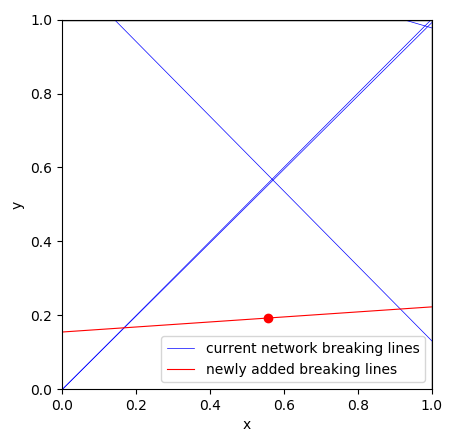}
\end{minipage}%
}%
\subfigure[Approximation by 2-7-1 NN]{
\begin{minipage}[t]{0.33\linewidth}
\centering
\includegraphics[width=1.8in]{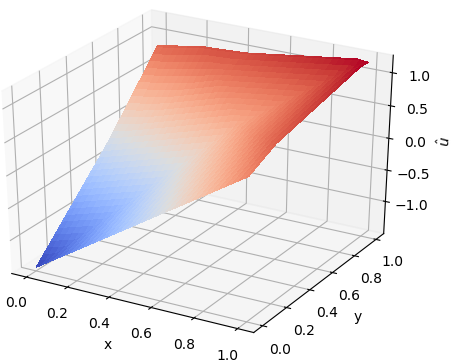}
\end{minipage}%
}%
\\
\centering
\subfigure[PP by 2-7-3-1 NN and the marked element]{
\begin{minipage}[t]{0.32\linewidth}
\centering
\includegraphics[width=1.7in]{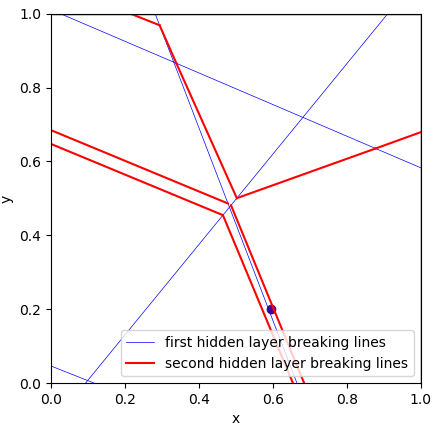}
\end{minipage}%
}%
\subfigure[PP by adaptive 2-7-4-1 NN]{
\begin{minipage}[t]{0.31\linewidth}
\centering
\includegraphics[width=1.67in]{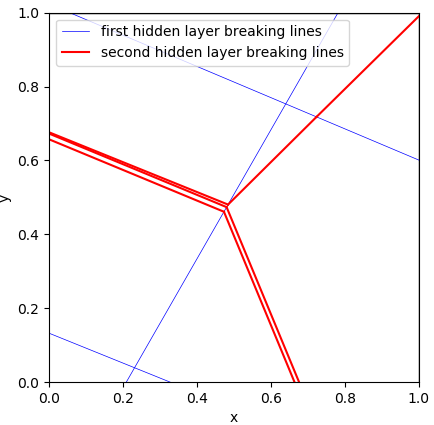}
\end{minipage}%
}%
\subfigure[Approximation using adaptive \newline 2-7-4-1 NN]{
\begin{minipage}[t]{0.33\linewidth}
\centering
\includegraphics[width=1.8in]{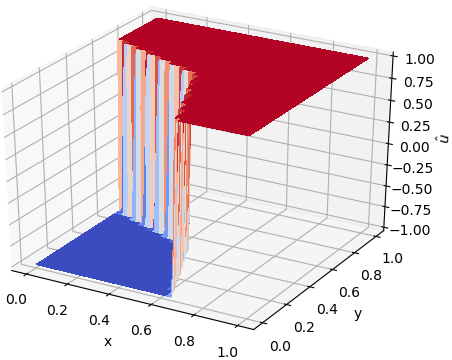}
\end{minipage}%
}%
\caption{Adaptive approximation results for the solution with a constant jump over two line segments}
\label{transport_adaptive_fig}
\end{figure}

Choosing $\gamma_1=0.6$ for the bulk marking strategy in (\ref{marking-2}), the ANE method is terminated when the relative error estimator $\xi_{\text{rel}}$ is less than the accuracy tolerance $\epsilon=0.05$. The architecture of the final NN model of this test problem is 2-7-4-1 with $\xi_{\text{rel}} = 0.019171 <\epsilon=0.05$ (see Table \ref{transport_adaptive_table}), and the corresponding approximation is depicted in Fig. \ref{transport_adaptive_fig} (f). Again, the corresponding  physical partition (see Fig. \ref{transport_adaptive_fig} (e)) accurately captures the interface by the piece-wise breaking lines of the second hidden layer. This explains why the ANE method produces an accurate approximation to a discontinuous solution without oscillation or overshooting.

Approximation results of intermediate NNs are also reported in Table \ref{transport_adaptive_table} and Fig. \ref{transport_adaptive_fig} (b)-(d). 
The second hidden layer is added when 
the improvement rate of two consecutive runs are less than the expectation rate (see the second and third rows in Table \ref{transport_adaptive_table}). 
Additionally, Fig. \ref{transport_adaptive_fig} (c) shows that a two-layer NN with seven neurons 
fails to approximate the discontinuous solution. This claim is actually true for a two-layer NN with 200 neurons (see \cite{Cai2021linear}). Hence, a three-layer NN is essential for learning the solution of this problem.

A fixed 2-7-4-1 NN is tested for a comparison. Due to random initialization of some parameters, the experiment is replicated 10 times. We observe from the training process that this fixed network gets trapped easily at a local minimum and fails to approximate the solution well in most of the duplicate runs. The best result is reported in Table \ref{transport_compare_table} and Fig. \ref{transport_compare_fig} (b). Although two network models have the same approximation power, attainable approximation may not be as accurate as the adaptive NN due to the inherent difficulty of non-convex optimization.

\begin{remark}
A fixed {\em 2-5-5-1} NN was employed for the same test problem in {\em \cite{Cai2021linear}}. Although a NN with fewer number of parameters can accurately approximate the solution, as pointed out in {\em Remark 5.1} in {\em \cite{Cai2021linear}} that the network gets trapped easily at a local minimum. Repeated training is necessary for a fixed network model. 
\end{remark}

\begin{table}[htb!]
\caption{Numerical results of adaptive and fixed NNs for the solution with a constant jump over two line segments}
\label{transport_compare_table}
\centering
\begin{tabular}{  |l |c |c | c | c |  c| c }
	\hline
	Network structure &
	$\#$ parameters & 
	${\dfrac{\|u-\bar{u}_{\tau}\|_0}{\|u\|_0}}$ &  
    \makecell{$\xi_{\text{rel}}=\frac{\mathcal{L}^{1/2}(\bar{u}_{\tau};\bf f)}{\mathcal{L}^{1/2}(\bar{u}_{\tau};\bf 0)}$} \\[1mm] \hline
	2-7-4-1 (Adaptive)  &51   &0.105822 &0.019171   \\ \hline 
	2-7-4-1 (Fixed)  &51   &0.164322 &0.116689   \\ \hline 
\end{tabular}
\end{table}

\begin{figure}[htbp]
\centering
\subfigure[Traces of the exact and numerical solutions on the plane $x=0$ by adaptive 2-7-4-1 NN]{
\begin{minipage}[t]{0.4\linewidth}
\centering
\includegraphics[width=2.1in]{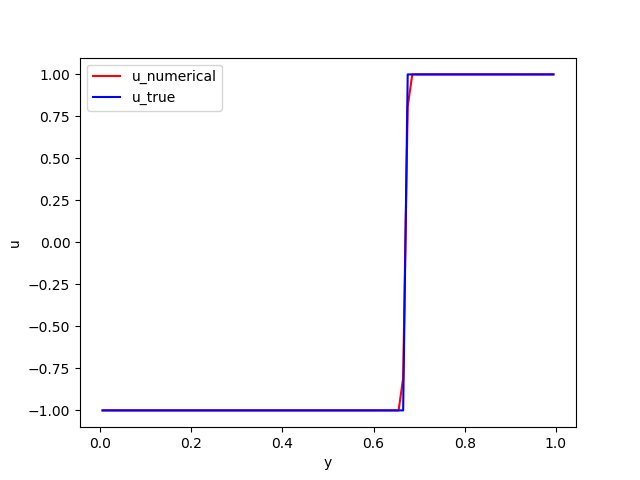}
\end{minipage}%
}%
\hspace{0.3in}
\subfigure[Traces of the exact and numerical solutions on the plane $x=0$ by fixed 2-7-4-1 NN]{
\begin{minipage}[t]{0.4\linewidth}
\centering
\includegraphics[width=2.1in]{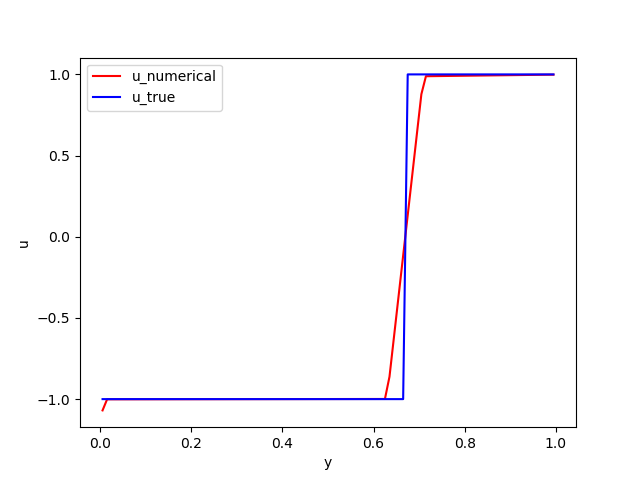}
\end{minipage}%
}%
\caption{Traces generated by adaptive and fixed NNs for the solution with a constant jump over two line segments}
\label{transport_compare_fig}
\end{figure}

\subsubsection{Non-constant jump over a straight line}The second test problem is again the equation in (\ref{pde}) defined on the domain $\Omega=(0,1)^2$ with a constant advection velocity field and a piece-wise smooth inflow boundary condition. Specifically, $\gamma =1$, $\bm{\beta} = (1,1)^T/\sqrt2$, and $\Gamma_-=\Gamma_-^1\cup \Gamma_-^2\equiv \{(0,y):\, y \in (0,1)\} \cup \{(x,0):\, x \in (0,1)\}$. Choose $g$ and $f$ accordingly such that the exact solution $u$ is
\[
u(x,y) = \left\{ \begin{array}{ll}
 \sin(x+y), & (x,y)\in \Omega_1=\{(x,y)\in (0,1)^2:\, y>x\}, \\[2mm]
 \cos(x+y), & (x,y)\in \Omega_2=\{(x,y)\in (0,1)^2:\, y<x\}.
 \end{array}\right.
 \]
As presented in Fig. \ref{smooth_adaptive_fig} (a), the interface of the discontinuous solution is the diagonal line $y=x$ and the jump over the interface is not a constant.
 
\begin{figure}[htbp]
\centering
\subfigure[Exact solution $u$]{
\begin{minipage}[t]{0.3\linewidth}
\centering
\includegraphics[width=1.7in]{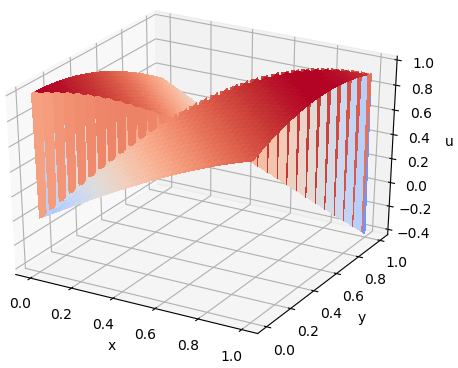}
\end{minipage}%
}%
\hspace{0.05in}
\subfigure[PP by 2-13-1 NN and elements with large errors (red dots)]{
\begin{minipage}[t]{0.3\linewidth}
\centering
\includegraphics[width=1.6in]{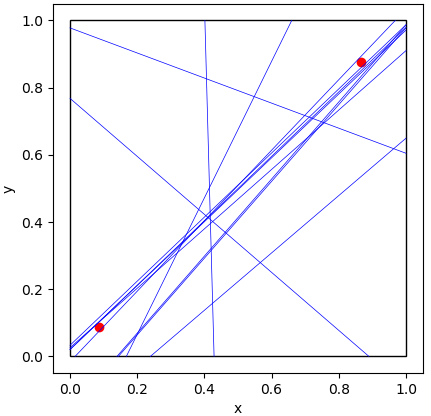}
\end{minipage}%
}%
\hspace{0.05in}
\subfigure[Traces of exact and numerical solutions on $y=1-x$ by 2-13-1 NN]{
\begin{minipage}[t]{0.3\linewidth}
\centering
\includegraphics[width=1.65in]{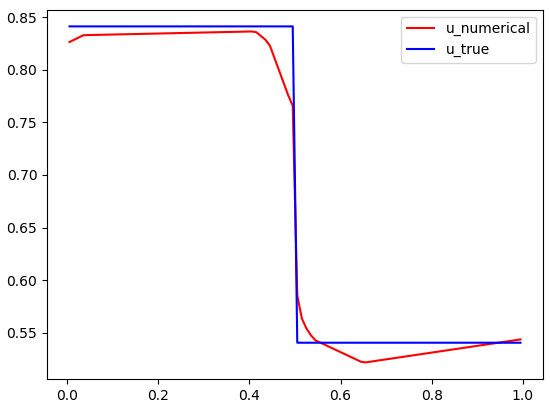}
\end{minipage}%
}%
\\
\centering
\subfigure[Approximation by adaptive 2-13-10-1 NN]{
\begin{minipage}[t]{0.3\linewidth}
\centering
\includegraphics[width=1.7in]{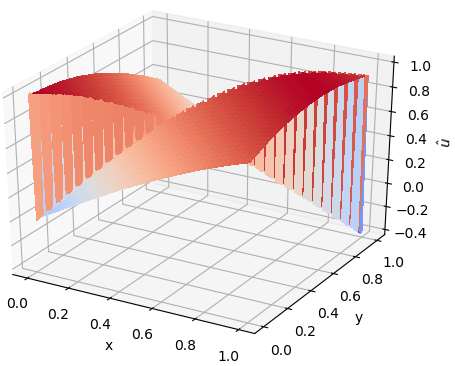}
\end{minipage}%
}%
\hspace{0.05in}
\subfigure[PP by adaptive 2-13-10-1 NN]{
\begin{minipage}[t]{0.3\linewidth}
\centering
\includegraphics[width=1.6in]{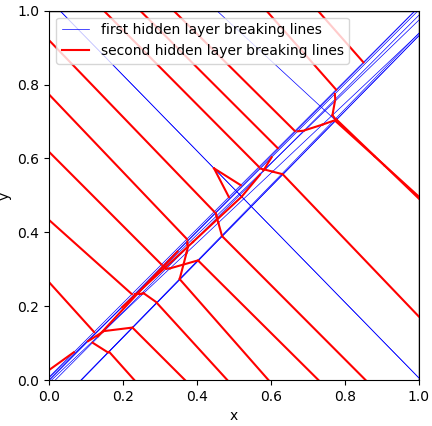}
\end{minipage}%
}%
\hspace{0.05in}
\subfigure[Traces of exact and numerical solutions on $y=1-x$ by adaptive 2-13-10-1 NN]{
\begin{minipage}[t]{0.3\linewidth}
\centering
\includegraphics[width=1.65in]{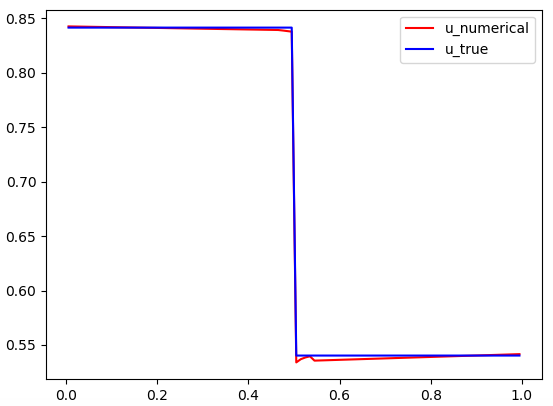}
\end{minipage}%
}%
\caption{Adaptive approximation results for the solution with a non-constant jump}
\label{smooth_adaptive_fig}
\end{figure}

Starting at a two-layer NN with six neurons and choosing $\gamma_1=0.3$ in the bulk marking strategy in (\ref{marking-2}), the ANE process repeats itself multiple runs until the accuracy tolerance $\epsilon=0.03$ is achieved. Ultimately, the ANE stops at a 2-13-10-1 NN model with the relative error estimator $\xi_{\text{rel}}=0.025733$ (see Table \ref{smooth_adaptive_table}). Fig. \ref{smooth_adaptive_fig} (d) and (e) illustrate the approximation and the corresponding physical partition using the final model. In addition, the traces of the exact and numerical solutions on the plane $y=1-x$ are depicted in Fig. \ref{smooth_adaptive_fig} (f), which clearly show that the final NN model is capable of accurately approximating the discontinuous solution without oscillation.

In Fig. \ref{smooth_adaptive_fig} (b)-(c), we also present the traces of the exact and numerical solution and the corresponding physical partition using an intermediate 2-13-1 NN. Again, this two-layer NN fails to provide a good approximation (see Fig. \ref{smooth_adaptive_fig} (c)) even though the corresponding physical partition (see Fig. \ref{smooth_adaptive_fig}(b)) locates the discontinuous interface. 
Moreover, as reported in Table \ref{smooth_compare_table}, the adaptive model yields to a better approximation result comparing to a fixed ReLU NN model of the same size.

\begin{table}[htb!]
\caption{Adaptive numerical results for the solution with a non-constant jump}
\label{smooth_adaptive_table}
\centering
\begin{tabular}{  |l |c |c | c | c |  c| c }
	\hline
	Network structure &
	$\#$ parameters & 
	${\dfrac{\|u-\bar{u}_{\tau}\|_0}{\|u\|_0}}$ &  
    \makecell{$\xi_{\text{rel}}=\frac{\mathcal{L}^{1/2}(\bar{u}_{\tau};\bf f)}{\mathcal{L}^{1/2}(\bar{u}_{\tau};\bf 0)}$} &
	\makecell{Improvement rate \\ $\eta$}\\[1mm] \hline
	2-6-1 &19  &0.085907 &0.178871  & --  \\ \hline
	2-8-1 &25  &0.075888 &0.157503  & 0.912494 \\ \hline
	2-10-1 &31  &0.070408 &0.135401  & 1.340723  \\ \hline
	\textbf{2-13-1}&\textbf{40}  &\textbf{0.070891} &\textbf{0.129806}  &\textbf{0.365856}    \\ \hline 
	2-15-1&46  &0.068234 &0.1250658  &0.522031   \\ \bottomrule
	2-13-2-1  & 57 &0.042813   & 0.100613 &1.290553\\ \hline 
2-13-4-1  & 87 &0.033823   & 0.091411 &0.505859\\ \hline 
2-13-7-1  & 132 &0.029862 & 0.065525 &1.429230 \\ \hline
2-13-9-1  & 162 &0.013429 & 0.044559 &2.883692  \\ \hline
\textbf{2-13-10-1} & \textbf{177} &\textbf{0.004651} & \textbf{0.025733} &\textbf{7.856341} \\ \hline
\end{tabular}
\end{table}

\begin{table}[htb!]
\caption{Numerical results of adaptive and fixed NNs for the solution with a non-constant jump}
\label{smooth_compare_table}
\centering
\begin{tabular}{  |l |c |c | c | c |  c| c }
	\hline
	Network structure &
	$\#$ parameters & 
	${\dfrac{\|u-\bar{u}_{\tau}\|_0}{\|u\|_0}}$ &  
    \makecell{$\xi_{\text{rel}}=\frac{\mathcal{L}^{1/2}(\bar{u}_{\tau};\bf f)}{\mathcal{L}^{1/2}(\bar{u}_{\tau};\bf 0)}$} \\[1mm] \hline
2-13-10-1 (Adaptive) & 177 &0.004651 & 0.025733 \\ \hline
2-13-10-1 (Fixed) &  177 &0.033602  &0.049884  \\\hline
\end{tabular}
\end{table}

\section{Conclusion}

Designing an optimal deep neural network for a given task is important and challenging in many machine learning applications. To address this important, open question, we have proposed the adaptive network enhancement method for generating a nearly optimal multi-layer neural network for a given task within some prescribed accuracy. This self-adaptive algorithm is based on the novel network enhancement strategies introduced in this paper that determine when a new layer and how many new neurons should be added when the current NN is not sufficient for the given task. This adaptive algorithm learns not only from given information (data, function, PDE) but also from the current computer simulation, and it is therefore a learning algorithm at a level which is more advanced than common machine learning algorithms.

The resulting non-convex optimization at each adaptive step is computationally intensive and complicated with possible many global/local minimums. The ANE method provides a natural process for obtaining a good initialization that assists training significantly.
Moreover, to provide a better initial guess, we have introduced an advanced procedure for initializing newly added neurons that are not at the first hidden layer.

In \cite{LiuCai1, LiuCai2} and this paper, we have demonstrated that the ANE method can automatically design a nearly minimal two- or multi-layer NN to learn functions exhibiting sharp transitional layers as well as continuous/discontinuous solutions of PDEs. Functions and PDEs with sharp transitions or discontinuities at unknown location have been a computational challenge when approximated using other functional classes such as polynomials or piecewise polynomials with fixed meshes.
In our future work, we plan to extend the applications of  self-adaptive DNN to a broader set of tasks such as data fitting, classification, etc., where training data is limited  but
given. The ANE method has a potential to resolve the so-called “over-fitting” issue
when data is noisy. 


 
\bigskip
\bibliographystyle{ieee}
\bibliography{main.bbl}

\end{document}